\newtheorem*{DLRTheorem}{Theorem}
\newtheorem {theorem}{Theorem}[section]
\newtheorem {proposition}[theorem]{Proposition}
\newtheorem {definition}[theorem]{Definition}
\def\ar{a\kern-.370em\raise.16ex\hbox{\char95\kern-0.53ex\char'47}\kern.05em}
\def\ees{{\accent"5E e}\kern-.385em\raise.2ex\hbox{\char'23}\kern-.08em}
\def\eex{{\accent"5E e}\kern-.470em\raise.3ex\hbox{\char'176}}
\def\AR{A\kern-.46em\raise.80ex\hbox{\char95\kern-0.53ex\char'47}\kern.13em}
\def\EES{{\accent"5E E}\kern-.5em\raise.8ex\hbox{\char'23 }}
\def\EEX{{\accent"5E E}\kern-.60em\raise.9ex\hbox{\char'176}\kern.1em}
\def\ow{o\kern-.42em\raise.82ex\hbox{
\vrule width .12em height .0ex depth .075ex \kern-0.16em \char'56}\kern-.07em}
\def\OW{O\kern-.460em\raise1.36ex\hbox{
\vrule width .13em height .0ex depth .075ex \kern-0.16em \char'56}\kern-.07em}
\def\UW{U\kern-.42em\raise1.36ex\hbox{
\vrule width .13em height .0ex depth .075ex \kern-0.16em \char'56}\kern-.07em}
\def\DD{D\kern-.7em\raise0.4ex\hbox{\char '55}\kern.33em}
\def\Limsup{\mathop{{\rm Lim}\,{\rm sup}}}
\title{The radius of metric regularity at infinity}
\author{MINH T\`UNG NGUY\EEX N} 
\address[Minh T\`ung Nguy\eex n]{Faculty of Data Science of Business, Ho Chi Minh University of Banking, Ho Chi Minh City,Vietnam}
\email{tungnm@hub.edu.vn}
\author{TI\EES N-S\OW N PH\d{A}M}
\address[Ti\ees n-S\ow n Ph\d{a}m]{Department of Mathematics, Dalat University, 1 Phu Dong Thien Vuong, Dalat, Vietnam}
\email{sonpt@dlu.edu.vn}
\date{ \today}
\subjclass[2010]{58C20 $\cdot$ 49J52 $\cdot$ 49J53 $\cdot$ 90C30 $\cdot$ 90C46}
\date{\today}
\keywords{Metric regularity at infinity, Strong metric regularity at infinity, Radius of regularity, Stability, Coderivatives at infinity}
\subjclass[2010]{49J52 $\cdot$ 49J53  $\cdot$ 49K40}
\begin{document}

\begin{abstract} 
This paper, in the setting {\em at infinity}, presents some relationships between the modulus of metric regularity and 
the radius of (strong) metric regularity that gives a measure of the extent to which a set-valued mapping can be perturbed before (strong) metric regularity is lost. The results given here can be viewed as versions at infinity of \cite[Theorem~1.5]{Dontchev2003} and \cite[Theorem~4.6]{Dontchev2004}.
\end{abstract}

\maketitle

\section{Introduction}

Metric regularity is one of the central concepts of variational analysis and plays a crucial role in the study of some basic problems of analysis such as the existence and behavior of solutions of generalized equations. For more details, including historical remarks, we refer the reader to the comprehensive monographs \cite{Borwein2005, Dontchev2009, Ioffe2017, Mordukhovich2006, Mordukhovich2018, Penot2013, Rockafellar1998}.

This paper deals with metric regularity properties from a new perspective, which addresses their stability {\em at infinity.}
Attention will be paid to the radius of metric regularity theorem that was initiated by Dontchev, Lewis and Rockafellar in \cite{Dontchev2003}. 

Let $X$ and $Y$ be Banach spaces, $F \colon X \rightrightarrows Y$ be a set-valued mapping, and 
$(\bar{x}, \bar{y}) \in \mathrm{gph}F.$ Recall that the mapping $F$ is {\em metrically regular} at $(\bar{x}, \bar{y})$ if there is a constant $\kappa > 0$ together with neighborhoods $U$ of $\bar{x}$ and $V$ of $\bar{y}$ such that 
\begin{eqnarray*}
\mathrm{dist}\big(x, F^{-1}(y) \big) & \leqslant & \kappa\, \mathrm{dist}\big(y, F(x) \big) \quad \textrm{ for all } \quad (x, y) \in U \times V,
\end{eqnarray*} 
where $\mathrm{dist}(\cdot, \cdot)$ denotes the usual distance function and $F^{-1} \colon Y \rightrightarrows X$ stands for the inverse mapping of $F.$
The infimum of the set of values $\kappa$ for which this holds is called the {\em modulus of metric regularity} at $(\bar{x}, \bar{y})$ and denoted by $\mathrm{reg} F (\bar{x}, \bar{y}).$ The mapping $F$ is {\em strongly metrically regular} at $(\bar{x}, \bar{y})$ if it is metrically regular at $(\bar{x}, \bar{y})$ and the inverse mapping $F^{-1}$ has a single-valued localization around $(\bar{y}, \bar{x}).$

For a single-valued mapping $f \colon X \to Y$ and a point $\bar{x} \in X$ we set
\begin{eqnarray*}
\mathrm{lip} f(\bar{x}) &:=& \mathop{\mathop {\limsup}\limits_{x, \, x' \to \bar{x}} } \limits_{x \ne x'} \frac{\|f(x) - f(x')\|}{\|x - x'\|}.
\end{eqnarray*}
Observe that $\mathrm{lip} f(\bar{x}) < \infty$ if and only if $f$ is Lipschitz continuous around $\bar{x}.$

The following result combines \cite[Theorem~1.5]{Dontchev2003} and \cite[Theorem~4.6]{Dontchev2004} (see also \cite[Theorem~6A.7 and 6A.8]{Dontchev2009}, \cite[Theorem~5.61]{Ioffe2017}, \cite[Theorem~3.4]{Mordukhovich2004}); it provides relationships between the modulus of metric regularity and the radius of (strong) metric regularity.
\begin{DLRTheorem}[radius theorem for (strong) metric regularity] \label{DLR}
For a mapping $F \colon X \rightrightarrows Y$ and any $(\bar{x}, \bar{y}) \in \mathrm{gph} F$ at which $\mathrm{gph} F$ is locally closed, one has
\begin{eqnarray*}
\mathop{\mathop {\inf}\limits_{f \colon X \to Y}} \limits_{f(0) = 0} \{\mathrm{lip} f(\bar{x}) \mid F + f \textrm{ is not metrically regular at $(\bar{x}, \bar{y})$} \} &\ge&
\frac{1}{\mathrm{reg} F (\bar{x}, \bar{y})}.
\end{eqnarray*}
If $F$ is strongly metrically regular at $(\bar{x}, \bar{y}),$ then 
\begin{eqnarray*}
\mathop{\mathop {\inf}\limits_{f \colon X \to Y}} \limits_{f(0) = 0} \{\mathrm{lip} f(\bar{x}) \mid F + f \textrm{ is not strongly metrically regular at $(\bar{x}, \bar{y})$} \} &\ge&
\frac{1}{\mathrm{reg} F (\bar{x}, \bar{y})}.
\end{eqnarray*}
If $X$ and $Y$ are finite-dimensional, then both inequalities become equations. Moreover, the infimum is unchanged if taken with respect to linear mappings of rank $1.$ 
\end{DLRTheorem}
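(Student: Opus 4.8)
The plan is to prove the two displayed inequalities by the two classical mechanisms --- perturbation stability for the lower bounds, a coderivative criterion for the reverse bounds --- and then to combine them in the finite-dimensional case. Put $\kappa := \mathrm{reg} F(\bar{x},\bar{y})$; there is nothing to prove unless $0 < \kappa < \infty$, i.e.\ $F$ is metrically regular at $(\bar{x},\bar{y})$, which for the second inequality is automatic since strong metric regularity implies metric regularity.

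\emph{Lower bounds.} The key input is the Lyusternik--Graves / Milyutin perturbation theorem for set-valued mappings: if $F$ is metrically regular at $(\bar{x},\bar{y})$ with modulus $\kappa$ and $f\colon X\to Y$ satisfies $\mathrm{lip} f(\bar{x}) < 1/\kappa$, then $F+f$ is metrically regular at the corresponding reference point, with $\mathrm{reg}(F+f) \le \kappa/(1-\kappa\,\mathrm{lip} f(\bar{x}))$; moreover, if $F$ is in addition strongly metrically regular there, then so is $F+f$. Contraposing: if $F+f$ fails to be metrically regular (resp.\ strongly metrically regular), then $\mathrm{lip} f(\bar{x}) \ge 1/\kappa$ necessarily. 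Passing to the infimum over the admissible perturbations $f$ yields both displayed inequalities at once.

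\emph{Reverse inequality in finite dimensions.} Here I would use the Mordukhovich coderivative criterion in quantitative form: with $\mathrm{gph}F$ locally closed at $(\bar{x},\bar{y})$ and $\dim X,\dim Y<\infty$, one has $\kappa = \|D^*F(\bar{x},\bar{y})^{-1}\|^{+} = \sup\{\|y^*\| : x^*\in D^*F(\bar{x},\bar{y})(y^*),\ \|x^*\|\le 1\}$, and $F+g$ is metrically regular at a graph point iff the kernel of its coderivative there is $\{0\}$. By positive homogeneity of $D^*F(\bar{x},\bar{y})$ and compactness, choose $y^*$ with $\|y^*\|=1$ and $x^*\in D^*F(\bar{x},\bar{y})(y^*)$ with $\|x^*\|=1/\kappa$ (so $x^*\neq 0$, as $\kappa<\infty$); pick $v\in Y$ with $\|v\|=1$ and $\langle y^*,v\rangle=1$, and set $f(x):=-\langle x^*,x\rangle\,v$. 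This $f$ is linear of rank $1$ with $\|f\|=\|x^*\|=1/\kappa$, so $\mathrm{lip} f(\bar{x})=1/\kappa$. Since $f^*y^*=-x^*$, the coderivative sum rule for a smooth (here linear) perturbation gives $0\in D^*F(\bar{x},\bar{y})(y^*)+f^*y^* = D^*(F+f)\big(\bar{x},\bar{y}+f(\bar{x})\big)(y^*)$ (because $x^*+f^*y^*=0$), with $y^*\neq 0$; hence $F+f$ is not metrically regular at $(\bar{x},\bar{y}+f(\bar{x}))$ --- note $\mathrm{gph}(F+f)$ is still locally closed there, being the image of $\mathrm{gph} F$ under the homeomorphism $(x,y)\mapsto(x,y+f(x))$. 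Thus the first infimum is $\le 1/\kappa$, and with the lower bound it equals $1/\kappa$; since $f$ is linear of rank $1$ the infimum is unchanged under that restriction. For the second display, strong metric regularity implies metric regularity, so this very $f$ also destroys strong metric regularity; hence the second infimum is $\le 1/\kappa$ too and, with its lower bound, equals $1/\kappa$, again with rank-one linear maps sufficing.

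I expect the main obstacle to be the reverse inequality: it rests on the coderivative criterion being simultaneously necessary and sufficient and, in its quantitative version, delivering the exact modulus $\kappa$ --- which is precisely why finite-dimensionality is imposed, only the lower bound $\ge$ surviving in a general Banach space. The remaining care is routine bookkeeping: tracking the reference point $\bar{y}$ versus $\bar{y}+f(\bar{x})$, checking that local closedness of the graph is preserved by $f$, and invoking the exact coderivative sum rule $D^*(F+f)=D^*F+f^*$ for a linear perturbation $f$.
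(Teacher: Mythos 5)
The paper does not actually prove this theorem---it is quoted as background from Dontchev--Lewis--Rockafellar and Dontchev--Rockafellar---but your argument is the standard proof and is precisely the blueprint the paper itself follows for its at-infinity analogues (Lyusternik--Graves perturbation stability for the lower bounds, the Mordukhovich coderivative criterion plus an explicit rank-one perturbation for the reverse inequality in finite dimensions, and reduction of the strong case to the plain case), so I consider it essentially the same approach and correct. The one point to tidy is the normalization you already flag: to make $F+f$ fail to be metrically regular at the stated point $(\bar{x},\bar{y})$ rather than at $(\bar{x},\bar{y}+f(\bar{x}))$, take the affine rank-one map $f(x)=-\langle x^*,x-\bar{x}\rangle v$ (equivalently, translate $\bar{x}$ to the origin, which is what the hypothesis $f(0)=0$ in the statement implicitly assumes).
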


At this point we would like to note that the concept of {\em metric regularity at infinity} of set-valued mappings between finite-dimensional spaces 
is recently introduced in \cite{PHAMTS2023-6} (see also \cite{PHAMTS2023-5, PHAMTS2023-4}).
So it is natural to have a radius theorem for this concept.

\subsection*{Contributions}
Let $F \colon X \rightrightarrows Y$ be a set-valued mapping with closed graph, $\overline{y} \in \mathbb{R}^m$ be a {\em non-proper value} of $F,$ and let $\mathscr{F}$ be the set of  single-valued mappings $f \colon X \to Y$ satisfying the following conditions:
\begin{eqnarray*}
\lim_{\|x\| \to \infty} \|f(x)\| &=& 0 \quad \textrm{ and } \quad  \mathrm{lip} f(\infty) \ < \ \infty.
\end{eqnarray*}
(Notation and definitions will be given in the next sections). The main results of this paper are as follows.
\begin{enumerate}[{\rm (i)}]
\item The following inequality holds true:
\begin{eqnarray*}
\inf_{f \in \mathscr{F}} \{\mathrm{lip} f(\infty) \mid F + f \textrm{ is not metrically regular at $(\infty, \bar{y})$} \} &\ge&
\frac{1}{\mathrm{reg} F (\infty, \bar{y})}.
\end{eqnarray*}

\item If $F$ is strongly metrically regular at $(\infty, \bar{y}),$ then 
\begin{eqnarray*}
\inf_{f \in \mathscr{F}} \{\mathrm{lip} f(\infty) \mid F + f \textrm{ is not strongly metrically regular at $(\infty, \bar{y})$} \} &\ge&
\frac{1}{\mathrm{reg} F (\infty, \bar{y})}.
\end{eqnarray*}

\item If $X$ and $Y$ are finite-dimensional, then both inequalities become equations; moreover, the infimum is unchanged if restricted to mappings $f \in \mathscr{F}$ of rank one Lipschitz.
\end{enumerate}

The main tools of our analysis involve normal cones and coderivatives {\em at infinity}, which are recently introduced in \cite{PHAMTS2023-6}. In particular, the proof of (iii) is based on 
a {\em version at infinity} of the Mordukhovich criterion which states that
\begin{eqnarray*}
\inf\,\{\|x^*\| \mid x^* \in D^*F (\infty, \overline{y})(y^*) \ \textrm{ with } y^* \in Y^* \textrm{ and }  \|y^*\| = 1 \}  &=& 
\frac{1}{\mathrm{reg} F (\infty, \bar{y})},
\end{eqnarray*}
where $D^*F (\infty, \overline{y}) \colon Y^* \rightrightarrows X^*$ denotes the coderivative mapping of $F$ at $(\infty, \overline{y}).$

\medskip

The rest of the paper is organized as follows. Notation and definitions from variational analysis are recalled in Section~\ref{Section2}. The results and their proofs are presented in Sections~\ref{Section3}~and~\ref{Section4}.
Conclusions are given in Section~\ref{Section5}.

\section{Preliminaries} \label{Section2}

\subsection{Notation and definitions}
Throughout the paper we use standard notation, with special symbols introduced where they are defined. 
Unless otherwise stated, all spaces in question are Banach whose norms are always denoted by $\|\cdot\|$ and all finite-dimensional spaces are supposed to be equipped with the usual scalar product $\langle \cdot, \cdot \rangle$ and the corresponding Euclidean norm.

Let $X$ be a Banach space with its dual space $X^*.$ 
%while $\langle \cdot, \cdot \rangle$ stands for the bilinear form defining the pairing between the two spaces $X$ and $X^*.$ 
We denote by $\mathbb{B}_r(x)$ the open ball centered at $x$ with radius $r;$  when ${x}$ is the origin of $X$ we write $\mathbb{B}_{r}$ instead of $\mathbb{B}_{r}({x}),$ and when $r = 1$ we write  $\mathbb{B}$ instead of $\mathbb{B}_{1}.$

For a nonempty set $\Omega \subset X,$ the closure, interior, convex hull and conic hull of $\Omega$ are denoted, respectively, by $\mathrm{cl}\, {\Omega},$ $\mathrm{int}\, {\Omega},$ $ \mathrm{co}\, \Omega,$ and $\mathrm{cone}\, \Omega.$

The distance in $X$ between a point $x$ and a set $C$ will be denoted by $\mathrm{dist}(x, C);$ thus, $\mathrm{dist}(x, C) := \inf\{\|x - x'\| \mid x' \in C\}.$ A {\em neighborhood of the infinity} is defined as the set $X \setminus {\mathbb{B}}_r$ for some $r > 0.$
We will adopt the convention that $\frac{1}{0} = \infty,$ $\frac{1}{\infty} = 0,$ $\inf \emptyset = \infty$ and $\sup \emptyset = -\infty;$ the notation $x  \to \infty $ means that $\|x\| \to \infty.$

Let $F \colon X \rightrightarrows Y$ be a set-valued mapping between Banach spaces. The {\em domain} and {\em graph} of $F$ are taken to be the sets
\begin{eqnarray*}
\mathrm{dom} F &:=& \{x\in X \mid F(x)\not= \emptyset\}, \\
\mathrm{gph} F&:=& \{(x,y)\in X \times Y \mid y \in F(x)\}.
\end{eqnarray*}
The {\em Painlev\'e--Kuratowski outer limit} of $F$ is given by
\begin{eqnarray*}
\Limsup_{x' \to {x}} F(x') &:=& \{y \in Y \mid \exists x_k \to {x}, \exists y_k \in F(x_k), y_k \to y\}.
\end{eqnarray*}
The inverse mapping $F^{-1} \colon Y \rightrightarrows X$ is defined by $F^{-1}(y) := \{x \in X \mid y \in F(x)\}.$
The mapping $F$ is called {\em positively homogeneous} if $0 \in F(0)$ and $t F(x) \subset F(t x)$ for all $x \in X$ and $t > 0;$
it is {\em sublinear} when, in addition, $F(x + x')  \supset F(x) + F(x').$
When the mapping $F$ is positively homogeneous, we associate the {\em upper norm}
\begin{eqnarray*}
\|F\|_+ &:=& \sup \{\|y\| \mid y \in F(x), \|x\| \le 1\}.
\end{eqnarray*}

\subsection{Normal cones and coderivatives}
Here we recall definitions of normal cones to sets and coderivatives of set-valued mappings, which can be found in~\cite{Mordukhovich2006, Mordukhovich2018, Rockafellar1998}.

In the rest of this section we will assume that $X$ and $Y$ are finite-dimensional spaces. 

\begin{definition}[normal cones]{\rm 
Consider a set $\Omega\subset X$ and a point ${x} \in \Omega.$
\begin{enumerate}
\item[(i)]  The {\em regular normal cone} (known also as the {\em prenormal} or {\em Fr\'echet normal cone}) $\widehat{N}_{\Omega}({x})$ to $\Omega$ at ${x}$ consists of all vectors $x^* \in X^*$ satisfying
\begin{eqnarray*}
\langle x^* , x' - {x} \rangle & \leqslant & o(\|x' -  {x}\|) \quad \textrm{ as } \quad x' \to {x} \quad \textrm{ with } \quad x' \in \Omega.
\end{eqnarray*}

\item[(ii)] 
The {\em limiting normal cone} (known also as the {\em basic} or {\em Mordukhovich normal cone}) $N_{\Omega} ({x})$ to $\Omega$ at ${x}$ consists of all vectors $x^*  \in X^*$ such that there are sequences $x_k \to {x}$ with $x_k \in \Omega$ and $x^*_k \rightarrow x^*$ with $x^*_k \in \widehat N_{\Omega}(x_k),$ or in other words, 
\begin{eqnarray*}
{N}_\Omega({x}) & := &  \Limsup_{x' \xrightarrow{\Omega} {x}}\widehat{N}_\Omega({x}'),
\end{eqnarray*}
where $x' \xrightarrow{\Omega} {x}$ means that $x' \rightarrow {x} $ with $x' \in \Omega.$ 
\end{enumerate}

If $x \not \in \Omega,$ we put $\widehat{N}_{\Omega}({x}) := \emptyset$ and ${N}_\Omega({x}) := \emptyset.$
}\end{definition}

\begin{definition}[coderivative]{\rm 
Consider a set-valued mapping $F \colon X \rightrightarrows Y$ and a point $(\overline{x}, \overline{y}) \in \mathrm{gph} F.$ Assume that $\mathrm{gph} F$ is locally closed at $(\overline{x}, \overline{y}).$ {\em The (basic) coderivative} of $F$ at $(\overline{x}, \overline{y})$ is the set-valued mapping $D^*F(\overline{x}, \overline{y}) \colon Y^* \rightrightarrows X^*$ defined by
\begin{eqnarray*}
D^*F(\overline{x}, \overline{y})(y^*) &=& \{x^* \in X^* \mid (x^*, -y^*)\in N_{\mathrm{gph} F} (\overline{x}, \overline{y}) \} \quad \textrm{ for all } \quad y^* \in Y^*.
\end{eqnarray*}
}\end{definition}

\subsection{Normal cones and coderivatives at infinity}
We also recall definitions of normal cones at infinity to sets and coderivatives at infinity of set-valued mappings; see \cite{PHAMTS2023-6}.

\begin{definition}[normal cone at infinity]{\rm 
Consider a set $\Omega \subset X \times Y$ and a point $\bar{y} \in Y.$ The {\em normal cone to $\Omega$ at $(\infty, \overline{y})$} is defined by
\begin{eqnarray*}
N_{\Omega}(\infty, \overline{y}) &:=& \Limsup_{(x, y) \xrightarrow{\Omega} (\infty, \overline{y})} \widehat{N}_{\Omega}(x, y),
\end{eqnarray*}
where ${(x, y) \xrightarrow{\Omega} (\infty, \overline{y})} $ means that $\|x\| \rightarrow \infty$ and $y \rightarrow \overline{y}$ with $(x, y) \in \Omega.$ 
}\end{definition}

Let $F\colon X \rightrightarrows Y$ be a set-valued mapping with closed graph. We will associate with $F$ the {\em Jelonek set}
\begin{eqnarray*}
J(F) &:=& \{y \in Y\mid \exists (x_k, y_k)  \xrightarrow{\mathrm{gph} F} (\infty, y) \}.
\end{eqnarray*}

\begin{definition}[coderivative at infinity] {\rm 
Let $\overline{y} \in J(F).$ {\em The coderivative} of $F$ at $(\infty, \overline{y})$ is the set-valued mapping $D^*F(\infty, \overline{y}) \colon Y^* \rightrightarrows X^*$ defined by
\begin{eqnarray*}
D^*F(\infty, \overline{y})(y^*) &:=& \{x^* \in X^* \mid (x^*, -y^*)\in N_{\mathrm{gph} F}(\infty, \overline{y})\} \quad \textrm{ for } \ y^* \in Y^*.
\end{eqnarray*}
}\end{definition}

It is easy to see that the coderivative of $F$ at $(\infty, \overline{y})$ is a positively homogeneous mapping with closed graph.
Moreover, we have the following property.

\begin{proposition}\label{Prop25}
Let $\overline{y} \in J(F).$ For all ${y}^* \in Y^*$ one has
\begin{eqnarray*}
D^*F(\infty, \overline{y})({y}^*) &=& \mathop{\mathop {\Limsup}\limits_{(x, y)\xrightarrow{\mathrm{gph} F}(\infty, \overline{y})} }
\limits_{z^*  \to {y}^*} {D^*F(x, y)(z^*)}.
\end{eqnarray*}
\end{proposition}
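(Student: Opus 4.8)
The plan is to establish the two inclusions of the asserted equality separately, after observing that the statement is really a property of the limiting normal cone at infinity. Writing $\Omega := \mathrm{gph} F$ and recalling that, by definition, $x^* \in D^*F(\infty,\overline{y})(y^*)$ means $(x^*,-y^*) \in N_{\Omega}(\infty,\overline{y})$ whereas $x^* \in D^*F(x,y)(z^*)$ means $(x^*,-z^*) \in N_{\Omega}(x,y)$, the claim amounts to saying that the Painlev\'e--Kuratowski outer limit defining $N_{\Omega}(\infty,\overline{y})$ may be computed using the \emph{limiting} normal cones $N_{\Omega}(x,y)$ at finite points of $\Omega$ in place of the \emph{regular} normal cones $\widehat{N}_{\Omega}(x,y)$. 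This is the ``at infinity'' analogue of the familiar identity $N_{\Omega}(\overline{x}) = \Limsup_{x \to \overline{x}} N_{\Omega}(x)$ in finite dimensions, and, like it, it is an instance of the collapse of an iterated outer limit to a single one.

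First I would dispose of the inclusion ``$\subseteq$'', which is immediate from $\widehat{N}_{\Omega} \subset N_{\Omega}$. Indeed, if $x^* \in D^*F(\infty,\overline{y})(y^*)$, pick $(x_k,y_k) \xrightarrow{\Omega}(\infty,\overline{y})$ and $(a_k^*,b_k^*) \to (x^*,-y^*)$ with $(a_k^*,b_k^*) \in \widehat{N}_{\Omega}(x_k,y_k)$; then $(a_k^*,b_k^*) \in N_{\Omega}(x_k,y_k)$, so with $z_k^* := -b_k^*$ one has $a_k^* \in D^*F(x_k,y_k)(z_k^*)$, $z_k^* \to y^*$, $a_k^* \to x^*$, and $x^*$ lies in the right-hand side.

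The substance is in the reverse inclusion ``$\supseteq$'', handled by a diagonal argument. Given $x^*$ in the right-hand side, there are $(x_k,y_k) \xrightarrow{\Omega}(\infty,\overline{y})$, $z_k^* \to y^*$, $x_k^* \to x^*$ with $(x_k^*,-z_k^*) \in N_{\Omega}(x_k,y_k)$. Unfolding the definition of the limiting normal cone at each finite point $(x_k,y_k)$ produces, for every $k$, sequences $(x_{k,j},y_{k,j}) \xrightarrow{\Omega}(x_k,y_k)$ and $(u_{k,j}^*,v_{k,j}^*) \to (x_k^*,-z_k^*)$ as $j \to \infty$ with $(u_{k,j}^*,v_{k,j}^*) \in \widehat{N}_{\Omega}(x_{k,j},y_{k,j})$. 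Using that $X$, $Y$, $X^*$, $Y^*$ are finite-dimensional (hence metrizable), one chooses for each $k$ an index $j(k)$ so large that
\begin{eqnarray*}
\|(x_{k,j(k)},y_{k,j(k)}) - (x_k,y_k)\| & < & 1/k, \\
\|(u_{k,j(k)}^*,v_{k,j(k)}^*) - (x_k^*,-z_k^*)\| & < & 1/k,
\end{eqnarray*}
and sets $\widetilde{x}_k := x_{k,j(k)}$, $\widetilde{y}_k := y_{k,j(k)}$, $\widetilde{u}_k^* := u_{k,j(k)}^*$, $\widetilde{v}_k^* := v_{k,j(k)}^*$. Then $(\widetilde{x}_k,\widetilde{y}_k) \in \Omega$, $\|\widetilde{x}_k\| \geqslant \|x_k\| - 1/k \to \infty$, $\widetilde{y}_k \to \overline{y}$, $\widetilde{u}_k^* \to x^*$, $\widetilde{v}_k^* \to -y^*$, and $(\widetilde{u}_k^*,\widetilde{v}_k^*) \in \widehat{N}_{\Omega}(\widetilde{x}_k,\widetilde{y}_k)$. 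Hence $(x^*,-y^*) \in N_{\Omega}(\infty,\overline{y})$, i.e. $x^* \in D^*F(\infty,\overline{y})(y^*)$, completing the proof.

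I expect the only genuine point requiring care to be the bookkeeping in this diagonal extraction --- in particular, noting that each approximating point $(x_{k,j},y_{k,j})$ automatically belongs to $\Omega = \mathrm{gph} F$ (since the regular normal cone is empty off $\Omega$), so the diagonal sequence is admissible in the outer limit defining $N_{\mathrm{gph} F}(\infty,\overline{y})$. Beyond this, there is no analytic obstacle: finite-dimensionality makes the relevant outer limits metrizable, and the collapse is then purely mechanical.
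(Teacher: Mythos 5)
Your proof is correct: the ``$\subseteq$'' inclusion follows from $\widehat{N}_{\Omega}\subset N_{\Omega}$, and the ``$\supseteq$'' inclusion is the standard diagonal extraction, which is legitimate here because the spaces are finite-dimensional and the approximating points automatically lie in $\mathrm{gph}\,F$. The paper itself only cites \cite[Proposition~3.9]{PHAMTS2023-6} for this fact, and your argument is precisely the standard proof of that cited result, so there is nothing to add.
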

\begin{proof}
See \cite[Proposition~3.9]{PHAMTS2023-6}.
\end{proof}

\section{Metric regularity at infinity} \label{Section3}

This section concerns the perturbation distance to the failure of metric regularity at infinity. We begin with the following.

\begin{definition}\label{DNSR} {\rm
Let $X$ and $Y$ be Banach spaces, $F \colon X \rightrightarrows Y$ be a set-valued mapping, and $\bar{y} \in J(F).$ 
The mapping $F$ is said to be {\em metrically regular} at $(\infty, \bar{y})$ if there are constants $\kappa > 0, \gamma > 0,$ 
a neighborhood $U$ of the infinity in $X,$ and a neighborhood $V$ of $\overline{y}$ in $Y$ such that for all $(x, y) \in U \times V$ with $\mathrm{dist}\big(y, F(x)\big)  < \gamma,$ we have
\begin{eqnarray*}
\mathrm{dist}\big(x, F^{-1}(y) \big) & \leqslant & \kappa\, \mathrm{dist}\big(y, F(x) \big).
\end{eqnarray*} 
The infimum of the set of values $\kappa$ for which this holds is the {\em modulus of metric regularity} at $(\infty, \bar{y})$ and denoted by $\mathrm{reg} F (\infty, \bar{y}).$ The case $\mathrm{reg} F (\infty, \bar{y}) = \infty$ corresponds to the absence of metric regularity at $(\infty, \bar{y}).$
}\end{definition}

Let $X$ and $Y$ be Banach spaces. For a single-valued mapping $f \colon X \to Y$ we set
\begin{eqnarray*}
\mathrm{lip} f(\infty) &:=& \limsup_{\|x\| \to \infty, \, \|x'\| \to \infty, \, x\ne x'} \frac{\|f(x) - f(x')\|}{\|x - x'\|}.
\end{eqnarray*}
Observe that $\mathrm{lip} f(\infty) < \infty$ if and only if $f$ is {\em Lipschitz continuous at infinity} in the sense that there exist
a constant $\lambda > 0$ and a neighborhood at infinity $U$ in $X$ such that
\begin{eqnarray*}
\|f(x) - f(x')\| &\leqslant & \lambda\|x - x'\| \quad \textrm{ for all } \quad x, x'  \in U.
\end{eqnarray*}

Let $\mathscr{F}$ be the set of  single-valued mappings $f \colon X \to Y$ satisfying the following conditions:
\begin{eqnarray*}
\lim_{\|x\| \to \infty} \|f(x)\| &=& 0 \quad \textrm{ and } \quad  \mathrm{lip} f(\infty) \ < \ \infty.
\end{eqnarray*}

Recall from \cite[p.~118]{Ioffe2017} that a mapping $f \colon X \to Y$  between Banach spaces is {\em rank one Lipschitz} if, for any $x \in X,$ there is a neighborhood $U$ of $x$ in $X$ on which $f$ can be represented in the form 
\begin{eqnarray*}
f(u) &=& \xi(u)y \quad \textrm{ for } \quad u \in U,
\end{eqnarray*}
where $\xi \colon U \to \mathbb{R}$ is Lipschitz continuous and $y \in Y.$

A set-valued mapping $F \colon X \rightrightarrows Y$ with $\bar{y} \in J(F)$ can be perturbed by adding to $F$ a single-valued mapping $f \colon X \rightarrow Y$ with $\lim_{\|x\| \to \infty} \|f(x)\| = 0,$ so as to get a mapping $F + f$ still having $\bar{y} \in J(F + f).$ 
It is natural to measure, with respect to the pair $(\infty, \bar{y}),$ where metric regularity holds, how far $F$ can be perturbed before metric regularity may be lost; equivalently, how big $f$ can be before $F + f$ fails to be metrically regular at $(\infty, \bar{y}).$
This question is answered by the following result.

\begin{theorem}[radius theorem for metric regularity at infinity] \label{MainTheorem1}
Let $X$ and $Y$ be Banach spaces, $F \colon X \rightrightarrows Y$ be a set-valued mapping with closed graph, and $\bar{y} \in J(F).$ Then 
\begin{eqnarray}\label{PT10}
\inf_{f \in \mathscr{F}} \{\mathrm{lip} f(\infty) \mid F + f \textrm{ is not metrically regular at $(\infty, \bar{y})$} \} &\ge&
\frac{1}{\mathrm{reg} F (\infty, \bar{y})}.
\end{eqnarray}
When $X$ and $Y$ are finite-dimensional, the inequality becomes an equation; moreover, the infimum is unchanged if it restricted to mappings $f \in \mathscr{F}$ of rank one Lipschitz.
\end{theorem}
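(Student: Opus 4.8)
The plan is to establish the inequality \eqref{PT10} in the full Banach space generality by a Lyusternik--Graves type perturbation argument adapted to the setting at infinity, and then, in the finite-dimensional case, to obtain the reverse inequality — together with the reduction to rank one Lipschitz perturbations — by combining the version at infinity of the Mordukhovich criterion with an explicit construction of an almost-optimal perturbation.

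\emph{The inequality.} If $\mathrm{reg} F(\infty,\bar y)=\infty$ the right-hand side of \eqref{PT10} is $0$ and there is nothing to prove, so assume $\kappa:=\mathrm{reg} F(\infty,\bar y)<\infty$ and fix $f\in\mathscr F$ with $\lambda:=\mathrm{lip} f(\infty)<1/\kappa$. It suffices to show that $F+f$ is metrically regular at $(\infty,\bar y)$ (note $\bar y\in J(F+f)$ since $\|f(x)\|\to 0$ as $\|x\|\to\infty$), for then no competitor in \eqref{PT10} can have $\mathrm{lip} f(\infty)<1/\mathrm{reg} F(\infty,\bar y)$. Pick $\kappa'>\kappa$ and $\lambda'>\lambda$ with $\kappa'\lambda'<1$. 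For $(x,y)$ with $\|x\|$ large, $y$ close to $\bar y$, and $\mathrm{dist}(y,(F+f)(x))$ small, set $x_0:=x$ and iterate: having $x_k$ with $y-f(x_{k-1})\in F(x_k)$, use metric regularity of $F$ at $(x_k,y-f(x_k))$ (with constant $\kappa'$) to choose $x_{k+1}\in F^{-1}(y-f(x_k))$ with $\|x_{k+1}-x_k\|\le\kappa'\,\mathrm{dist}(y-f(x_k),F(x_k))\le\kappa'\|f(x_k)-f(x_{k-1})\|\le\kappa'\lambda'\|x_k-x_{k-1}\|$, where the middle inequality uses $y-f(x_{k-1})\in F(x_k)$. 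Since $\kappa'\lambda'<1$ the sequence is Cauchy, its limit $x^*$ satisfies $\|x^*-x\|\le\frac{\kappa'}{1-\kappa'\lambda'}\,\mathrm{dist}(y,(F+f)(x))$, and $y\in(F+f)(x^*)$ by closedness of $\mathrm{gph} F$ and continuity of $f$ at infinity; hence $\mathrm{dist}(x,(F+f)^{-1}(y))\le\frac{\kappa'}{1-\kappa'\lambda'}\,\mathrm{dist}(y,(F+f)(x))$. The one delicate point is that every iterate must stay in the common neighborhood of infinity on which $F$ is metrically regular and $f$ is Lipschitz, and that $y-f(x_k)$ must stay in the relevant neighborhood of $\bar y$ with $\mathrm{dist}(y-f(x_k),F(x_k))$ below the threshold; this is arranged by shrinking the neighborhoods of $\infty$ and of $\bar y$ and the threshold, using the geometric decay of $\|x_{k+1}-x_k\|$ together with $\|f(x_k)\|\to 0$.

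\emph{Equality in the finite-dimensional case.} Since \eqref{PT10} is already available, it remains to produce, for each $\varepsilon>0$, a rank one Lipschitz map $f\in\mathscr F$ with $\mathrm{lip} f(\infty)\le\frac1{\mathrm{reg} F(\infty,\bar y)}+\varepsilon$ for which $F+f$ is not metrically regular at $(\infty,\bar y)$. If $\mathrm{reg} F(\infty,\bar y)=\infty$ take $f\equiv 0$; otherwise, by the version at infinity of the Mordukhovich criterion, choose $y^*\in Y^*$ with $\|y^*\|=1$ and $x^*\in D^*F(\infty,\bar y)(y^*)$ with $0<\|x^*\|<\frac1{\mathrm{reg} F(\infty,\bar y)}+\varepsilon$, and, by the definition of the normal cone at infinity, points $(x_k,y_k)\in\mathrm{gph} F$ with $\|x_k\|\to\infty$, $y_k\to\bar y$, and $(x_k^*,-y_k^*)\in\widehat N_{\mathrm{gph} F}(x_k,y_k)$ with $x_k^*\to x^*$, $y_k^*\to y^*$. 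After passing to a subsequence we may assume $\|y_k^*\|\ge\frac12$, $\|x_k^*\|/\|y_k^*\|\le\|x^*\|+\varepsilon$, and that the balls $\mathbb B_{R_k}(x_k)$ are pairwise disjoint (indeed separated) for some $R_k\downarrow 0$. Set $A_k:=-\|y_k^*\|^{-2}\langle x_k^*,\cdot\rangle\,y_k^*$; the idea is that near the finite point $x_k$ the perturbation should equal the classical ``regularity-killing'' rank one map $x\mapsto A_k(x-x_k)$, but globally it must decay. So let $f_k(x):=\theta_k(\langle x_k^*,x-x_k\rangle)\,\big(-\|y_k^*\|^{-2}y_k^*\big)$, where $\theta_k\colon\mathbb R\to\mathbb R$ is the clipping to $[-\eta_k,\eta_k]$ (hence $1$-Lipschitz, $\theta_k(t)=t$ for $|t|\le\eta_k$, $|\theta_k|\le\eta_k$), and put $f:=\sum_k\chi_k f_k$ with $\chi_k$ a Lipschitz cutoff equal to $1$ on $\mathbb B_{R_k/2}(x_k)$, supported in $\mathbb B_{R_k}(x_k)$, of Lipschitz constant $\le 4/R_k$. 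Choosing $\eta_k\downarrow 0$ small enough (e.g.\ $\eta_k\le\min\{\varepsilon R_k/8,\ \|x_k^*\|R_k/2\}$) one verifies: $f$ is rank one Lipschitz, since on a neighborhood of any point it coincides with one of the scalar-times-vector maps $\chi_k f_k$, or with $0$; $\|f(x)\|\to 0$ as $\|x\|\to\infty$, since $\|f_k\|\le 2\eta_k$ on $\mathbb B_{R_k}(x_k)$ and $f$ vanishes off $\bigcup_k\mathbb B_{R_k}(x_k)$; and, estimating by the product rule, $\mathrm{lip} f(\infty)\le\sup_k\big(\|x_k^*\|/\|y_k^*\|+8\eta_k/R_k\big)\le\|x^*\|+2\varepsilon$ — the clipping of $\theta_k$ being precisely what prevents the cutoff from inflating the Lipschitz constant.

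\emph{Failure of metric regularity, and conclusion.} For all large $k$ one has $f(x_k)=0$, so $(x_k,y_k)\in\mathrm{gph}(F+f)$, and $f$ coincides with $x\mapsto A_k(x-x_k)$ on a neighborhood of $x_k$, so $f$ is differentiable there with derivative $A_k$. Near $(x_k,y_k)$ the affine change of variables $(x,y)\mapsto(x,y-A_k(x-x_k))$ maps $\mathrm{gph}(F+f)$ onto $\mathrm{gph} F$ and transforms Fréchet normals by $(u,w)\in\widehat N_{\mathrm{gph}(F+f)}(x_k,y_k)\iff(u+A_k^*w,w)\in\widehat N_{\mathrm{gph} F}(x_k,y_k)$. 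Since $A_k^*y_k^*=-x_k^*$ and $(x_k^*,-y_k^*)\in\widehat N_{\mathrm{gph} F}(x_k,y_k)$, taking $w=-y_k^*$ gives $(0,-y_k^*)\in\widehat N_{\mathrm{gph}(F+f)}(x_k,y_k)$; passing to the limit along $(x_k,y_k)\xrightarrow{\mathrm{gph}(F+f)}(\infty,\bar y)$ yields $(0,-y^*)\in N_{\mathrm{gph}(F+f)}(\infty,\bar y)$, i.e.\ $0\in D^*(F+f)(\infty,\bar y)(y^*)$ with $\|y^*\|=1$, whence $\mathrm{reg}(F+f)(\infty,\bar y)=\infty$ by the Mordukhovich criterion at infinity. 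Letting $\varepsilon\downarrow 0$ gives the reverse of \eqref{PT10}, and since all the perturbations constructed above are rank one Lipschitz, this also shows that the infimum is unchanged when restricted to such maps. I expect the main obstacle to be exactly the design of $f$ in the equality part — making it localized, decaying at infinity, rank one, and with Lipschitz-at-infinity modulus not exceeding $\|x^*\|$ (for which the ``clip, then cut off'' device seems essential) — together with the bookkeeping in the iteration of the first part that keeps every iterate in the region where both hypotheses are in force.
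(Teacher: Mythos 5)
Your proposal is correct and follows essentially the same architecture as the paper: the inequality \eqref{PT10} via an iterative Lyusternik--Graves argument at infinity (the paper's Proposition~\ref{MD32}, with the same bookkeeping to keep all iterates in the neighborhood of infinity and the values below the threshold $\gamma$), and the reverse inequality in finite dimensions via the Mordukhovich criterion at infinity combined with a sum of disjointly supported rank-one bump perturbations along a sequence $(x_k,y_k)\xrightarrow{\mathrm{gph}F}(\infty,\bar y)$ realizing near-optimal coderivative elements (the paper's Propositions~\ref{MD33} and~\ref{MD34}). Your bump construction differs only cosmetically: you clip the linear functional and cut off, achieving exact cancellation $(0,-y_k^*)\in\widehat N_{\mathrm{gph}(F+f)}(x_k,y_k)$ by a direct affine change of variables, whereas the paper uses $f_k(x)=t_ks_k(x)\langle x_k^*,x-x_k\rangle v_k$ with a near-dual vector $v_k$ and only asymptotic cancellation via the coderivative sum rule; both yield $\mathrm{rg}^+(F+f)(\infty,\bar y)=0$. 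The one substantive item you take as given is the Mordukhovich criterion at infinity itself, $\mathrm{rg}^+F(\infty,\bar y)=1/\mathrm{reg}F(\infty,\bar y)$: this is not a citation in the paper but its Proposition~\ref{MD33}, proved there by a nontrivial argument using Ekeland's variational principle with a rescaled metric, so a self-contained proof would need to supply that step.
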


The proof of the theorem will be carried out in several propositions. The first one can be viewed as a version at infinity of the extended Lyusternik--Graves theorem; it provides the effect of Lipschitz perturbations on the modulus of metric regularity at infinity.

\begin{proposition}[extended Lyusternik--Graves theorem at infinity]\label{MD32}
Let $X$ and $Y$ be Banach spaces, $F \colon X \rightrightarrows Y$ be a set-valued mapping with closed graph, and $\bar{y} \in J(F).$ If $\mathrm{reg} F(\infty, \bar{y}) < \kappa < \infty$ and if $f \colon X \to Y$ is a single-valued mapping such that 
\begin{eqnarray*}
\lim_{\|x\| \to \infty} \|f(x)\| = 0 \quad \textrm{ and } \quad \mathrm{lip} f(\infty)  < \lambda < \kappa^{-1},
\end{eqnarray*}
then
\begin{eqnarray*}
\mathrm{reg} (F + f) (\infty, \bar{y}) &<& \frac{\kappa}{1 - \kappa \lambda}.
\end{eqnarray*}
\end{proposition}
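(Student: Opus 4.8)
The plan is to adapt the classical iteration argument behind the Lyusternik--Graves (Milyutin) perturbation theorem to the setting at infinity. Since $\mathrm{reg} F(\infty,\bar y)<\kappa$, first fix $\kappa_0$ with $\mathrm{reg} F(\infty,\bar y)<\kappa_0<\kappa$ so that the defining inequality of Definition~\ref{DNSR} holds for $F$ with constant $\kappa_0$: there are $\gamma>0$, $R>0$ and a neighborhood $V_0$ of $\bar y$ with $\mathrm{dist}(x,F^{-1}(y))\le\kappa_0\,\mathrm{dist}(y,F(x))$ whenever $\|x\|>R$, $y\in V_0$ and $\mathrm{dist}(y,F(x))<\gamma$. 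Then fix $\kappa'$ with $\kappa_0<\kappa'<\kappa$; since $t\mapsto t/(1-t\lambda)$ is increasing on $(0,1/\lambda)$ and $\lambda<\kappa^{-1}<(\kappa')^{-1}$, it suffices to prove $\mathrm{reg}(F+f)(\infty,\bar y)\le\kappa'/(1-\kappa'\lambda)$. Using $\mathrm{lip} f(\infty)<\lambda$, pick $R_f\ge R$ with $\|f(x)-f(x')\|\le\lambda\|x-x'\|$ for $\|x\|,\|x'\|>R_f$; using $\lim_{\|x\|\to\infty}\|f(x)\|=0$, pick a threshold $\gamma'\in(0,\gamma)$, a small ball $V\subset V_0$ around $\bar y$, and $R'>R_f+\kappa'\gamma'/(1-\kappa'\lambda)$, with room left so that a perturbation of size $\sup_{\|x\|>R'}\|f(x)\|$ cannot push a point of $V$ out of $V_0$.

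Next, for $x$ with $\|x\|>R'$ and $y\in V$ with $\mathrm{dist}(y,(F+f)(x))<\gamma'$ (the case $(F+f)(x)=\emptyset$ being vacuous), I would run the standard iteration: set $x_0:=x$, $y_n:=y-f(x_n)$, and, given $x_n$, invoke metric regularity of $F$ at $(x_n,y_n)$ to choose $x_{n+1}$ with $y_n\in F(x_{n+1})$ and $\|x_{n+1}-x_n\|\le\kappa'\,\mathrm{dist}(y_n,F(x_n))$; this is possible because $\kappa'>\kappa_0$ and $F(x_n)$ is closed ($\mathrm{gph} F$ is closed). Since $y_{n-1}\in F(x_n)$, for $n\ge1$ one gets $\mathrm{dist}(y_n,F(x_n))\le\|y_n-y_{n-1}\|=\|f(x_{n-1})-f(x_n)\|\le\lambda\|x_n-x_{n-1}\|\le\kappa'\lambda\,\mathrm{dist}(y_{n-1},F(x_{n-1}))$, whence $\mathrm{dist}(y_n,F(x_n))\le(\kappa'\lambda)^n\,\mathrm{dist}(y,(F+f)(x))$ and $\|x_{n+1}-x_n\|\le\kappa'(\kappa'\lambda)^n\,\mathrm{dist}(y,(F+f)(x))$. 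Thus $\{x_n\}$ is Cauchy with $\|x_n-x_0\|\le\frac{\kappa'}{1-\kappa'\lambda}\,\mathrm{dist}(y,(F+f)(x))$; setting $x_*:=\lim_n x_n$, the relations $(x_{n+1},y_n)\in\mathrm{gph} F$ with $y_n\to y-f(x_*)$ (Lipschitz continuity of $f$ at infinity) together with closedness of $\mathrm{gph} F$ give $y-f(x_*)\in F(x_*)$, i.e. $x_*\in(F+f)^{-1}(y)$, so $\mathrm{dist}(x,(F+f)^{-1}(y))\le\|x_*-x\|\le\frac{\kappa'}{1-\kappa'\lambda}\,\mathrm{dist}(y,(F+f)(x))$. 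This is metric regularity of $F+f$ at $(\infty,\bar y)$ with modulus at most $\kappa'/(1-\kappa'\lambda)<\kappa/(1-\kappa\lambda)$.

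The hard part will be the bookkeeping that makes this iteration legitimate in the regime at infinity: one must check by induction that every $x_n$ remains in the neighborhood of infinity $X\setminus\mathbb{B}_R$ (where the inequality for $F$ is available) and that every $y_n$ remains in $V_0$ with $\mathrm{dist}(y_n,F(x_n))<\gamma$. The first follows from the a priori geometric bound $\|x_n-x_0\|\le\kappa'\gamma'/(1-\kappa'\lambda)$ combined with $\|x_0\|>R'$; the second follows from $\|y_n-\bar y\|\le\|y-\bar y\|+\|f(x_n)\|$ and the decay $\|f(x)\|\to0$, which is precisely where the hypothesis $\lim_{\|x\|\to\infty}\|f(x)\|=0$ enters (it is also what ensures $\bar y\in J(F+f)$, so that $\mathrm{reg}(F+f)(\infty,\bar y)$ is meaningful). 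Everything else is the routine geometric-series arithmetic of the Lyusternik--Graves scheme.
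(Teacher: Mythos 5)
Your proposal is correct and follows essentially the same route as the paper: the classical Lyusternik--Graves iteration $x_{n+1}\in F^{-1}(y-f(x_n))$ run on a neighborhood of infinity, with the induction keeping every iterate outside $\mathbb{B}_R$ and every $y_n$ within the admissible neighborhood of $\bar y$ via the geometric-series bound and the decay of $f$. The only cosmetic difference is how you absorb the selection slack in choosing $x_{n+1}$ (you interpolate $\kappa_0<\kappa'<\kappa$, whereas the paper keeps $\kappa$ and introduces an auxiliary $\epsilon$ sent to $0$ at the end), which incidentally delivers the strict inequality in the conclusion a bit more directly.
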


\begin{proof} (cf. \cite{Dontchev2003}).
By assumptions, there are constants $r > 0, R > 0$ and $\gamma > 0$ satisfying the following conditions:
\begin{enumerate}[{\rm ({a}1)}]
\item For $x \in X \setminus \mathbb{B}_R$ and $y \in \mathbb{B}_r(\bar{y})$ with $\mathrm{dist}\big(y, F(x)\big) < \gamma,$ we have
\begin{eqnarray*}
\mathrm{dist}\big(x, F^{-1}(y)\big) &\leqslant& \kappa\, \mathrm{dist}\big(y, F(x) \big).
\end{eqnarray*}

\item For $x, x' \in X \setminus \mathbb{B}_R,$ we have
\begin{eqnarray*}
\|f(x)\| \ < \ \frac{r}{2} \quad \textrm{ and } \quad \|f(x) - f(x')\| \ \leqslant \ \lambda\|x - x'\|.
\end{eqnarray*}
\end{enumerate}

Choose $\epsilon_1, \gamma_1, r_1$ and $R_1$ such that
\begin{eqnarray*}
0 &<& \epsilon_1 \ < \ \min \{1 - \kappa \lambda, \frac{\gamma}{\lambda}\}, \\
0 &<& \gamma_1 \ < \ \min \{\gamma, \frac{\gamma - \epsilon_1 \lambda}{\kappa \lambda}\}, \\
0 &<& r_1 \ < \ \frac{r}{2}, \\
R_1 &>& R + \frac{\kappa \gamma_1 + \epsilon_1}{1 - (\kappa \lambda + \epsilon_1)}.
\end{eqnarray*}

Note that $\bar{y} \in J(F + f).$ Let $x \in X \setminus \mathbb{B}_{R_1}$ and $y \in \mathbb{B}_{r_1}(\bar{y})$ with $\mathrm{dist}\big(y, (F + f)(x)\big) < \gamma_1.$ We will show that 
\begin{eqnarray} \label{Eqn*}
\mathrm{dist}\big(x, (F + f)^{-1}(y)\big) &\leqslant& \frac{\kappa}{1 - \kappa \lambda}\, \mathrm{dist}\big(y, (F + f)(x) \big).
\end{eqnarray}

Indeed, we have
\begin{eqnarray*}
\|y - f(x) - \bar{y}\| &\leqslant& \|y - \bar{y}\|  + \|f(x)\| \ < \ r_1 + \frac{r}{2} \ < \ r,
\end{eqnarray*}
and
\begin{eqnarray*}
\mathrm{dist}\big(y - f(x), F(x)\big) &=&  \mathrm{dist}\big(y, (F + f)(x)\big) \ < \ \gamma_1 \ < \ \gamma.
\end{eqnarray*}
The condition~(a1) shows that
\begin{eqnarray*}
\mathrm{dist}\big(x, F^{-1}(y - f(x))\big) &\leqslant& \kappa\, \mathrm{dist}\big(y - f(x), F(x)\big) \ = \ \kappa\, \mathrm{dist}\big(y, (F + f)(x)\big) \ < \ \kappa \gamma_1 \ < \ \infty,
\end{eqnarray*}
and therefore $F^{-1}(y - f(x)) \ne \emptyset.$ Fix $\epsilon$ such that
\begin{eqnarray*}
0 &<& \epsilon \ < \  \epsilon_1.
\end{eqnarray*}
By definition, there exists $z^1 \in F^{-1}(y - f(x))$ such that
\begin{eqnarray*}
\|z^1 - x\| &<& \mathrm{dist}\big(x, F^{-1}(y - f(x))\big) + \epsilon.
\end{eqnarray*}
If $z^1 = x,$ then $x \in F^{-1}(y - f(x)),$ which is the same as $x \in (F + f)^{-1}(y).$ Then \eqref{Eqn*} holds trivially, since its left side is $0.$

Let $z^1 \ne x.$ In this case, we deduce from the condition~(a1) that
\begin{eqnarray*}
\|z^1 - x\| 
&<& \mathrm{dist}\big(x, F^{-1}(y - f(x))\big) + \epsilon \\
&\leqslant& \kappa\, \mathrm{dist}\big(y - f(x), F(x)\big) + \epsilon \\
&=& \kappa\, \mathrm{dist}\big(y, (F + f)(x)\big) + \epsilon \\
&<& \kappa \gamma_1 + \epsilon.
\end{eqnarray*}
Hence
\begin{eqnarray*}
\|z^1\| &\ge& \|x\| - \|z^1 - x\|  \ > \ R_1 - (\kappa \gamma_1 + \epsilon) \ > \ R.
\end{eqnarray*}

By induction, we construct a sequence of points $z^k \in X,$ with $z^0 := x,$ such that, for $k = 0, 1, 2, \ldots,$
\begin{eqnarray}\label{Eqn10}
\|z^k\| &>& R, \quad z^{k + 1} \in F^{-1}(y - f(z^k)), \quad \textrm{ and } \quad 
\|z^{k + 1} - z^k\|  \ \leqslant \ (\kappa \lambda + \epsilon)^k \|z^1 - x\| .
\end{eqnarray}
We already found $z^1$ which gives us \eqref{Eqn10} for $k = 0.$ Suppose that for some integer $n \ge 1$ we have generated $z^1, \ldots, z^n$ satisfying~\eqref{Eqn10}. If $z^n = z^{n - 1},$ then $z^n \in F^{-1}(y - f(z^n))$ and so $z^n \in (F + f)^{-1}(y).$ In this case, we have
\begin{eqnarray*}
\mathrm{dist}\big(x, (F + f)^{-1}(y)\big)
&\leqslant& \|z^n - x\| \ \leqslant \ \sum_{i = 0}^{n - 1} \|z^{i + 1} - z^i\| \\
&\leqslant& \sum_{i = 0}^{n - 1} (\kappa \lambda + \epsilon)^i \|z^1 - x \|  \ < \ \frac{1}{1 - (\kappa \lambda + \epsilon)} \|z^1 - x\| \\
& \leqslant& \frac{\kappa}{1 - (\kappa \lambda + \epsilon)} \left ( \mathrm{dist}\big(y, (F + f)(x)\big) + \frac{\epsilon}{\kappa} \right).
\end{eqnarray*}
Since the left side of this inequality does not depend on $\epsilon$ on the right, we get \eqref{Eqn*} by letting $\epsilon$ go to $0.$

Assume $z^n \ne z^{n - 1}.$ We first show that $\|z^i\| > R$ for all $i = 1, 2, \ldots, n.$ Indeed, for such an $i$ we have 
\begin{eqnarray*}
\|z^i - x\| &\leqslant& \sum_{j = 0}^{i - 1} \|z^{j + 1} - z^j\| \ \leqslant  \ \sum_{j = 0}^{i - 1}(\kappa \lambda + \epsilon)^j \|z^1 - x\| \\
&<& \frac{1}{1 - (\kappa \lambda + \epsilon)} \|z^1 - x\| \\
&<& \frac{\kappa \gamma_1 + \epsilon}{1 - (\kappa \lambda + \epsilon)},\\
\end{eqnarray*}
which implies that
\begin{eqnarray*}
\|z^i\| &\ge& \|x\| - \|z^{i} - x\|  \ > \ R_1 - \frac{\kappa \gamma_1 + \epsilon}{1 - (\kappa \lambda + \epsilon)} \ > \ R.
\end{eqnarray*}
Thus, $\|z^i\| > R$ for $i = 1, \ldots, n.$ This, together with the condition~(a2), implies that
\begin{eqnarray*}
\|y - f(z^n) - \bar{y}\| &\leqslant& \|y - \bar{y}\| + \|f(z^n)\| \ < \ r_1 + \frac{r}{2} \ < \ r,
\end{eqnarray*}
and
\begin{eqnarray*}
\mathrm{dist}\big(y - f(z^n), F(z^n)\big)  &\leqslant & \|(y - f(z^n)) - (y - f(z^{n - 1}))\| \\
&=& \|f(z^n) - f(z^{n - 1}\| \\
& \leqslant &   \lambda \|z^n - z^{n - 1}\| \\ 
& \leqslant &  \lambda (\kappa \lambda + \epsilon)^{n - 1} \|z^1 - x\| \\
&<& \lambda (\kappa \lambda + \epsilon)^{n - 1}(\kappa \gamma_1 + \epsilon) \\
&<& \lambda (\kappa \gamma_1 + \epsilon) \ <  \ \gamma.
\end{eqnarray*}
The condition~(a1) now ensures that
\begin{eqnarray*}
\mathrm{dist}\big(z^n, F^{-1}(y - f(z^{n}))\big)
&\leqslant& \kappa \mathrm{dist}\big(y - f(z^n), F(z^n)\big) \ < \ \kappa \gamma \ < \ \infty,
\end{eqnarray*}
which yields $F^{-1}(y - f(z^{n})) \ne \emptyset.$ Since $\|z^n - z^{n - 1}\| > 0,$ there exists a point $z^{n + 1} \in F^{-1}(y - f(z^{n}))$ such that
\begin{eqnarray*}
\|z^{n + 1} - z^n\| &\leqslant& \mathrm{dist}\big(z^n, F^{-1}(y - f(z^{n}))\big) + \epsilon \|z^n - z^{n - 1}\| ,
\end{eqnarray*}
and then 
\begin{eqnarray*}
\|z^{n + 1} - z^n\| &\leqslant& \kappa \mathrm{dist}\big(y - f(z^n), F(z^n)\big) + \epsilon \|z^n - z^{n - 1}\|.
\end{eqnarray*}
Since $z^{n} \in F^{-1}(y - f(z^{n - 1}))$ and hence $y - f(z^{n - 1}) \in F(z^n),$ by invoking the induction hypothesis, we obtain
\begin{eqnarray*}
\|z^{n + 1} - z^n\| 
&\leqslant& \kappa \|(y - f(z^n))  - (y - f(z^{n - 1}))\| + \epsilon \|z^n - z^{n - 1}\| \\
&=& \kappa \|f(z^n) - f(z^{n - 1})\| + \epsilon \|z^n - z^{n - 1}\| \\
&\leqslant& \kappa \lambda \|z^n - z^{n - 1}\| + \epsilon \|z^n - z^{n - 1}\| \\
&=& (\kappa \lambda + \epsilon) \|z^n - z^{n - 1}\| \\
&\leqslant& (\kappa \lambda + \epsilon)^n \|z^1 - x\|.
\end{eqnarray*}
The induction is complete, and therefore \eqref{Eqn10} holds for all $k.$

Right after \eqref{Eqn10} we showed that when $z^k = z^{k - 1}$ for some $k$ then \eqref{Eqn*} holds. Suppose now that $z^k \ne z^{k - 1}$ for all $k.$ By virtue of the third condition in \eqref{Eqn10}, we see for any integers $n$ and $m$ with $m < n$ that
\begin{eqnarray*}
\|z^{n} - z^m\| 
&\leqslant& \sum_{k = m}^{n - 1} \|z^{k + 1} - z^{k}\| \\
&\leqslant& \sum_{k = m}^{n - 1} (\kappa \lambda + \epsilon)^k \|z^1 - x \| \\
&<& \frac{(\kappa \lambda + \epsilon)^m}{1 - (\kappa \lambda + \epsilon)} \|z^1 - x \|.
\end{eqnarray*} 
Hence the sequence $\{z^k\}$ satisfies the Cauchy condition, and so is convergent to some $z.$ Since the graph of $F$ is a closed set in $X \times Y,$ $y - f(z) \in F(z),$ that is $z \in (F + f)^{-1}(y).$ Moreover,
\begin{eqnarray*}
\mathrm{dist}\big(x, (F + f)^{-1}(y)\big)
&\leqslant& \|z - x\| \ = \ \lim_{k \to \infty} \|z^k - x\| \\
&\leqslant& \lim_{k \to \infty} \sum_{i = 0}^{k} \|z^{i + 1} - z^{i}\| \\
&\leqslant& \lim_{k \to \infty} \sum_{i = 0}^{k} (\kappa \lambda + \epsilon)^i \|z^{1} - x \| \\
&<& \frac{1}{1 - (\kappa \lambda + \epsilon)} \|z^{1} - x \| \\
&\leqslant& \frac{1}{1 - (\kappa \lambda + \epsilon)} \left( \kappa \mathrm{dist}\big(y, (F + f)(x)\big) + \epsilon\right) \\
&=& \frac{\kappa}{1 - (\kappa \lambda + \epsilon)} \left( \mathrm{dist}\big(y, (F + f)(x)\big) + \frac{\epsilon}{\kappa}\right).
\end{eqnarray*}
Taking the limit as $\epsilon \to 0,$ we obtain \eqref{Eqn*}, and the proof is complete.
\end{proof}

For a set-valued mapping $F \colon X \rightrightarrows  Y$ between finite-dimensional spaces and a point $\bar{y} \in J(F),$ we define a nonnegative quantity closely related to the regularity modulus of $F$ at infinity:
\begin{eqnarray*}
\mathrm{rg}^{+} F(\infty, \bar{y}) &:=& \inf\,\{\|x^*\| \mid x^* \in D^*F (\infty, \overline{y})(y^*) \ \textrm{ with } y^* \in Y^* \textrm{ and }  \|y^*\| = 1 \} .
\end{eqnarray*}
Note that (see, for example, \cite[Proposition~5.4]{Ioffe2017}) the reciprocal of this quantity equals the upper norm of the mapping $D^*F (\infty, \overline{y})^{-1}.$

The validity of the following result, which can be viewed as a version at infinity of the Mordukhovich criterion (see \cite{Mordukhovich1993}), itself will be crucial to our argument for establishing the second statement of Theorem~\ref{MainTheorem1}.

\begin{proposition}[Mordukhovich criterion at infinity] \label{MD33}
Let $X$ and $Y$ be finite-dimensional spaces, $F \colon X \rightrightarrows Y$ be a set-valued mapping with closed graph, and $\bar{y} \in J(F).$ Then
\begin{eqnarray*}
\mathrm{rg}^{+} F(\infty, \bar{y})  &=& \frac{1}{\mathrm{reg} F (\infty, \bar{y})}.
\end{eqnarray*} 
\end{proposition}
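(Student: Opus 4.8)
The plan is to prove the identity by establishing the two inequalities $\mathrm{rg}^{+}F(\infty,\bar y)\ge 1/\mathrm{reg}F(\infty,\bar y)$ and $\mathrm{rg}^{+}F(\infty,\bar y)\le 1/\mathrm{reg}F(\infty,\bar y)$ separately. Since $\mathrm{rg}^{+}F(\infty,\bar y)$ is the reciprocal of the upper norm of $D^{*}F(\infty,\bar y)^{-1}$, these are precisely the two halves of the Mordukhovich criterion transplanted to $(\infty,\bar y)$: the first says that metric regularity at $(\infty,\bar y)$ forces every $x^{*}\in D^{*}F(\infty,\bar y)(y^{*})$ to satisfy $\|x^{*}\|\ge\|y^{*}\|/\mathrm{reg}F(\infty,\bar y)$, and the second that this lower bound, conversely, forces metric regularity at $(\infty,\bar y)$ with controlled modulus. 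Finite-dimensionality of $X$ and $Y$ enters essentially on both sides through compactness of unit spheres and extraction of convergent subsequences; the degenerate cases ($\mathrm{reg}F(\infty,\bar y)=\infty$ for the first inequality, $\mathrm{rg}^{+}F(\infty,\bar y)=0$ for the second) are trivial and set aside.

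For $\mathrm{rg}^{+}F(\infty,\bar y)\ge 1/\mathrm{reg}F(\infty,\bar y)$ I would argue directly. Fix $\kappa>\mathrm{reg}F(\infty,\bar y)$ and associated constants $\gamma,R,r>0$ from Definition~\ref{DNSR}, and let $x^{*}\in D^{*}F(\infty,\bar y)(y^{*})$ with $\|y^{*}\|=1$. By the definition of $N_{\mathrm{gph}F}(\infty,\bar y)$ there are $(x_{k},y_{k})\xrightarrow{\mathrm{gph}F}(\infty,\bar y)$ and $(x_{k}^{*},-y_{k}^{*})\in\widehat N_{\mathrm{gph}F}(x_{k},y_{k})$ with $(x_{k}^{*},-y_{k}^{*})\to(x^{*},-y^{*})$. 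For each large $k$, each unit vector $v\in Y$ and each small $t>0$, put $y:=y_{k}-tv$; since $y_{k}\in F(x_{k})$ one has $\mathrm{dist}(y,F(x_{k}))\le t<\gamma$, so the estimate of Definition~\ref{DNSR} applies and gives $x\in F^{-1}(y)$ with $\|x-x_{k}\|\le\kappa t+t^{2}$. Feeding $(x,y)\in\mathrm{gph}F$ into the inequality defining $\widehat N_{\mathrm{gph}F}(x_{k},y_{k})$ and dividing by $t$ yields, after $t\downarrow 0$, $\langle y_{k}^{*},v\rangle\le\kappa\|x_{k}^{*}\|$; taking the supremum over $v$, then $k\to\infty$, gives $1=\|y^{*}\|\le\kappa\|x^{*}\|$. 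Hence $\|x^{*}\|\ge 1/\kappa$ for every such $x^{*}$, and $\kappa\downarrow\mathrm{reg}F(\infty,\bar y)$ closes this direction. The one point to note is that the truncation by $\gamma$ in Definition~\ref{DNSR} is harmless here, because the estimate is tested only at points $y$ lying within distance $t$ of $F(x_{k})$.

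For $\mathrm{rg}^{+}F(\infty,\bar y)\le 1/\mathrm{reg}F(\infty,\bar y)$ I would adapt the hard half of the Mordukhovich criterion, using Proposition~\ref{Prop25} to carry the conclusion from finite graph points to $(\infty,\bar y)$. Fix $\kappa<\mathrm{reg}F(\infty,\bar y)$. Since $\kappa$ is not an admissible modulus for any choice of constants in Definition~\ref{DNSR}, we obtain $(u_{k},v_{k})\to(\infty,\bar y)$ with $d_{k}:=\mathrm{dist}(v_{k},F(u_{k}))\downarrow 0$ and $\mathrm{dist}(u_{k},F^{-1}(v_{k}))>\kappa d_{k}$. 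The function $u\mapsto\mathrm{dist}(v_{k},F(u))$ is lower semicontinuous (here finite-dimensionality of $Y$ and closedness of $\mathrm{gph}F$ are used), so Ekeland's variational principle, applied at $u_{k}$ with tolerance $d_{k}$ and radius $\kappa d_{k}$, produces $\hat x_{k}$ with $\|\hat x_{k}-u_{k}\|\le\kappa d_{k}$, $\mathrm{dist}(v_{k},F(\hat x_{k}))\le d_{k}$, and $\hat x_{k}$ a global minimizer of $u\mapsto\mathrm{dist}(v_{k},F(u))+\frac{1}{\kappa}\|u-\hat x_{k}\|$; since $\|\hat x_{k}-u_{k}\|<\mathrm{dist}(u_{k},F^{-1}(v_{k}))$, also $\mathrm{dist}(v_{k},F(\hat x_{k}))>0$. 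Picking $\hat w_{k}\in F(\hat x_{k})$ with $\|v_{k}-\hat w_{k}\|=\mathrm{dist}(v_{k},F(\hat x_{k}))$ (again finite dimension), the pair $(\hat x_{k},\hat w_{k})\in\mathrm{gph}F$ minimizes $(u,w)\mapsto\|v_{k}-w\|+\frac{1}{\kappa}\|u-\hat x_{k}\|$ over $\mathrm{gph}F$, so by the Fermat rule and the limiting subdifferential sum rule there is $\xi_{k}^{*}$ with $\|\xi_{k}^{*}\|\le\frac{1}{\kappa}$ and $(-\xi_{k}^{*},-e_{k}^{*})\in N_{\mathrm{gph}F}(\hat x_{k},\hat w_{k})$, that is, $-\xi_{k}^{*}\in D^{*}F(\hat x_{k},\hat w_{k})(e_{k}^{*})$, where $e_{k}^{*}:=(\hat w_{k}-v_{k})/\|\hat w_{k}-v_{k}\|$ has unit norm. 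Because $\|\hat x_{k}-u_{k}\|\le\kappa d_{k}\to 0$ and $\|\hat w_{k}-v_{k}\|\le d_{k}\to 0$, the points $(\hat x_{k},\hat w_{k})$ converge to $(\infty,\bar y)$ along $\mathrm{gph}F$; passing to a subsequence with $e_{k}^{*}\to e^{*}$ and $\xi_{k}^{*}\to\xi^{*}$, Proposition~\ref{Prop25} gives $-\xi^{*}\in D^{*}F(\infty,\bar y)(e^{*})$ with $\|e^{*}\|=1$ and $\|\xi^{*}\|\le\frac{1}{\kappa}$, so $\mathrm{rg}^{+}F(\infty,\bar y)\le\frac{1}{\kappa}$. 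Letting $\kappa\uparrow\mathrm{reg}F(\infty,\bar y)$ finishes.

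I expect the second inequality to be the main obstacle; the first is a routine variational estimate. The delicate points in the second are three. First, the lower semicontinuity of $u\mapsto\mathrm{dist}(v_{k},F(u))$, which is what lets Ekeland's principle be invoked, and which genuinely needs both finite-dimensionality and the closed graph (to extract bounded subsequences inside $F(u_{k})$). Second, the calibration of the Ekeland radius $\kappa d_{k}$: it must be small enough that $\hat x_{k}$ leaves $F^{-1}(v_{k})$ but still stays within $o(1)$ of $u_{k}$, so that $(\hat x_{k},\hat w_{k})$ remains anchored at $(\infty,\bar y)$ while the slope $1/\kappa$ is transferred to the coderivative. Third, the appeal to Proposition~\ref{Prop25}, without which the finite-level coderivative elements $-\xi_{k}^{*}\in D^{*}F(\hat x_{k},\hat w_{k})(e_{k}^{*})$ could not be upgraded to an element of $D^{*}F(\infty,\bar y)$. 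One could instead try to deduce the second inequality from Proposition~\ref{Prop25}, the classical finite-dimensional Mordukhovich criterion at ordinary graph points, and the identity $\mathrm{reg}F(\infty,\bar y)=\limsup_{(x,y)\xrightarrow{\mathrm{gph}F}(\infty,\bar y)}\mathrm{reg}F(x,y)$, but this only relocates the difficulty into proving that identity, which asks one to glue the pointwise metric regularity of nearby finite graph points, each carrying its own shrinking neighborhoods, into a uniform estimate over a non-compact neighborhood of infinity.
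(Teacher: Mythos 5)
Your proposal is correct and follows essentially the same two-inequality strategy as the paper: the lower bound $\mathrm{rg}^{+}F(\infty,\bar y)\ge 1/\mathrm{reg}F(\infty,\bar y)$ by testing the Fr\'echet normal inequality at graph points $(x_k,y_k)\to(\infty,\bar y)$ against points supplied by the metric regularity estimate, and the upper bound by Ekeland's variational principle, the Fermat rule, the subdifferential sum rule, and Proposition~\ref{Prop25} to pass to $(\infty,\bar y)$. The only (immaterial) difference is in the Ekeland step: you perturb $u\mapsto\mathrm{dist}(v_k,F(u))$ on $X$ and then select a nearest point of $F(\hat x_k)$, whereas the paper perturbs $(x,y)\mapsto\|y-y_k\|+\delta_{\mathrm{gph}F}(x,y)$ on $X\times Y$ with the weighted metric $\|x-x'\|+\sqrt{r_k}\,\|y-y'\|$; both produce a coderivative element of norm at most $1/\kappa$ attached to a unit (or asymptotically unit) dual vector.
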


\begin{proof}
We first show that 
\begin{eqnarray*}
\mathrm{rg}^+ F(\infty, \bar{y}) & \geqslant & \frac{1}{\mathrm{reg}F (\infty, \bar{y})}.
\end{eqnarray*} 
There is nothing to prove when $\mathrm{reg}F (\infty, \bar{y}) = \infty,$ so assume $\mathrm{reg}F (\infty, \bar{y}) < \infty$ and take any $\kappa > \mathrm{reg}F (\infty, \bar{y}).$ There are a constant $\gamma > 0,$ a neighborhood $U$ of the infinity in $X$ and a neighborhood $V$ of $\overline{y}$ in $Y$ such that
\begin{eqnarray} \label{3PT3}
\mathrm{dist}(x, F^{-1}(y)) & \leqslant & \kappa\, \mathrm{dist}(y, F(x))
\end{eqnarray} 
for all $x \in U$ and $y \in V$ with $\mathrm{dist}(y, F(x))  < \gamma.$ 
Choose a positive constant $\epsilon < 2 \gamma$ such that $\mathbb{B}_{\epsilon}(\overline{y}) \subset V.$  

By definition, there exist sequences $(x_k, y_k) \in \mathrm{gph} F$ and $(x^*_k, -y^*_k)\in \widehat{N}_{\mathrm{gph} F}(x_k, y_k)$ such that
\begin{eqnarray*}
 (x_k, y_k) \to (\infty, \overline{y}), \quad \|x^*_k\| \to \mathrm{rg}^+F(\infty, \bar{y}), \quad  \textrm{ and } \quad \|y^*_k\| \to 1 \quad \textrm{ as } \quad k \to \infty.
\end{eqnarray*}
In particular, $x_k \in U$ and $y_k \in \mathbb{B}_{\epsilon/2}(\overline{y})$ for all $k$ sufficiently large. 

By the definition of  $\widehat{N}_{\mathrm{gph} F}(x_k, y_k),$ for each $k > 0$ we can find $\delta_k > 0$ such that 
\begin{eqnarray}\label{3PT4}
\langle x^*_k, x - x_k\rangle - \langle y^*_k, y - y_k\rangle &  \leqslant  & \frac{1}{k} \big(\|x - x_k\| + \|y - y_k\|\big)
\end{eqnarray}   
for all $(x, y) \in \mathrm{gph} F\cap \big (\mathbb{B}_{\delta_k}(x_k) \times \mathbb{B}_{\delta_k}(y_k)\big).$ 

Fix $k$ sufficiently large so that  $x_k \in U$ and $y_k \in \mathbb{B}_{\epsilon/2}(\overline{y}).$ 
Let $y := y_k - \frac{r}{\kappa} \frac{y^*_k}{\|y^*\|}$ with $0 < r < \min\{\delta_k, \kappa \delta_k, \dfrac{\kappa \epsilon}{2} \}.$ Then $y_k \in  \mathbb{B}_{\delta_k}(y_k)$ and 
\begin{eqnarray*}
\|y - \bar{y} \| &\leqslant& \|y_k - \bar{y} \|  + \frac{r}{\kappa} \ < \ \frac{\epsilon}{2} + \frac{\epsilon}{2}  \ = \ \epsilon.
\end{eqnarray*}
In particular, $y \in V.$ Moreover, we have
\begin{eqnarray*}
\mathrm{dist}(y, F(x_k)) & \leqslant & \|y - y_k\| \ = \ \frac{r}{\kappa} \ < \ \frac{\epsilon}{2} \ < \ \gamma.
\end{eqnarray*} 
Therefore, the inequality~\eqref{3PT3} and the closedness of the mapping $F$ ensure the existence of $x \in F^{-1}(y)$ such that 
\begin{eqnarray*}
\|x - x_k\| & \leqslant & \kappa\, \mathrm{dist}(y, F(x_k)) \ \leqslant \ \kappa \|y - y_k\| \ = \ r \ < \ \delta_k.
\end{eqnarray*}
This, together with the inequality~\eqref{3PT4}, implies easily that
\begin{eqnarray*}
\frac{1}{\kappa} r \|y^*_k\| & \leqslant & 
\|x^*_k\| \|x - x_k\| + \frac{1}{k} \big(\|x - x_k\| + \|y - y_k\|\big) \ \leqslant \  \|x^*_k\| r  + \frac{1}{k}( r + \frac{1}{\kappa} r) ,
\end{eqnarray*} 
which yields
\begin{eqnarray*}
\frac{1}{\kappa} \|y^*_k\| & \leqslant & \|x^*_k\|  + \frac{1}{k}( 1 + \frac{1}{\kappa}) .
\end{eqnarray*} 
Taking the limit as $k\to \infty,$ we obtain $\frac{1}{\kappa}  \leqslant \mathrm{rg}^+ F(\infty, \bar{y}).$ 
Since $\kappa > \mathrm{reg}F (\infty, \bar{y})$ was arbitrary, we conclude that
\begin{eqnarray*}
\frac{1}{\mathrm{reg}F (\infty, \bar{y})}  & \leqslant & \mathrm{rg}^+ F(\infty, \bar{y}).
\end{eqnarray*} 

To verify the opposite inequality, take any $\mu \in (0, \mathrm{rg}^+ F(\infty, \bar{y})).$ We will show that $\mu \leqslant \frac{1}{\mathrm{reg} F(\infty, \bar{y})},$ or equivalently, there are a constant $\gamma > 0,$ a neighborhood $U$ of the infinity in $X,$ and a neighborhood $V$ of $\overline{y}$ in $Y$ such that for all $x \in U$ and $y \in V$ with $\mathrm{dist}\big(y, F(x)\big)  < \gamma,$ we have
\begin{eqnarray*}
\mathrm{dist}\big(x, F^{-1}(y) \big) & \leqslant & \frac{1}{\mu}\, \mathrm{dist}\big(y, F(x) \big).
\end{eqnarray*} 

Indeed, if this is not the case, we can find a sequence $(x_k, y_k) \to (\infty, \overline{y})$ with $\mathrm{dist}\big(y_k, F(x_k)\big) \to 0$ such that
\begin{eqnarray} \label{PTM1}
\mathrm{dist}\big(x_k, F^{-1}(y_k) \big) & > & \frac{1}{\mu}\, \mathrm{dist}\big(y_k, F(x_k) \big).
\end{eqnarray}
Since the mapping $F$ has closed graph, for each $k$, there exists a point $z_k \in F(x_k)$ such that $r_k := \|y_k - z_k\| = \mathrm{dist}\big(y_k, F(x_k) \big).$ Certainly $r_k > 0$ and $r_k \to 0.$

Fix $k$ and define the function $\varphi \colon \mathbb{R}^n\times\mathbb{R}^m \to \overline{\mathbb{R}}$ by
\begin{eqnarray*}
\varphi (x, y) & := & \|y - y_k\|+\delta_{\mathrm{gph} F}(x, y),
\end{eqnarray*}
which is lower semi-continuous as the graph of $F$ is closed. Clearly $\varphi$ is nonnegative and 
$\varphi (x_k, z_k) = r_k \to 0$ as $k \to \infty,$ and so $\inf_{(x, y) \in \mathbb{R}^n\times\mathbb{R}^m} \varphi (x, y) = 0.$
Applying the Ekeland variational principle (see \cite[Theorem~1.1]{Ekeland1974}) to the function $\varphi$ with $\epsilon := r_k, \lambda := \frac{r_k}{\mu},$ the initial point $(x_k, z_k),$ and the metric space 
$(\mathbb{R}^n\times\mathbb{R}^m, d),$ where 
\begin{eqnarray*}
d\big((x, y), (x', y') \big) &:=& \|x - x'\| + \sqrt{r_k} \|y - y'\| \quad \textrm{ for } \quad (x, y), (x', y') \in \mathbb{R}^n\times\mathbb{R}^m,
\end{eqnarray*}
we find a pair $(\overline{x}_k, \overline{z}_k) \in \mathrm{gph}\, F$ such that 
\begin{eqnarray} \label{PTM2}
\|x_k - \overline{x}_k\| + \sqrt{r_k} \|z_k - \overline z_k\| & \leqslant & \frac{r_k}{\mu}
\end{eqnarray}
and $(\overline{x}_k, \overline z_k)$ is a global minimizer of the lower semi-continuous function 
\begin{eqnarray*}
\psi \colon \mathbb{R}^n \times \mathbb{R}^m \to\overline{\mathbb{R}}, && (x, y) \mapsto \|y - y_k\| + {\mu} \big(\|x - \overline{x}_k\| + \sqrt{r_k} \|y - \overline{z}_k\| \big) + \delta_{\mathrm{gph} F}(x, y).
\end{eqnarray*}
According to the Fermat rule, $(0, 0) \in \partial \psi (\overline{x}_k, \overline{z}_k).$ 
If $\overline{z}_k = y_k,$ then $\overline{x}_k \in F^{-1}(y_k),$ and by the inequalities~\eqref{PTM1}~and~\eqref{PTM2}, we get
\begin{eqnarray*}
\frac{r_k}{\mu} + \sqrt{r_k} \|z_k - \overline z_k\|  \ < \ \|x_k - \overline{x}_k\| + \sqrt{r_k} \|z_k - \overline z_k\| & \leqslant & \frac{r_k}{\mu},
\end{eqnarray*}
a contradiction. Therefore, $\overline{z}_k \ne y_k.$ Using some basic properties of subdifferentials (see \cite{Mordukhovich2018, Rockafellar1998}), we obtain
\begin{eqnarray*}
(0, 0)  &\in& \{0\} \times \frac{\overline{z}_k - y_k}{\|\overline{z}_k - y_k\|}  + {\mu} \big( \mathbb{B} \times \sqrt{r_k} \mathbb{B} \big) + {N}_{\mathrm{gph} F}(\overline{x}_k, \overline{z}_k).
\end{eqnarray*}
Hence there is a vector $(x^*_k, y^*_k) \in X^* \times Y^*$ satisfying
\begin{eqnarray*}
\|x^*_k\| \leqslant {\mu},  \quad  1 - \mu \sqrt{r_k}  \leqslant \|y^*_k\| \leqslant 1 + \mu \sqrt{r_k} \quad \textrm{ and } \quad  x^*_k \in D^*F(\overline{x}_k, \overline{z}_k)(y^*_k).
\end{eqnarray*}
Passing to subsequences if necessary, we may assume that $(x^*_k, y^*_k)\to (x^*, y^*).$ Certainly $\|x^*\| \leqslant \mu$ and $\|y^*\| = 1.$ Observe that $(\overline{x}_k,  \overline{z}_k) \to (\infty, \overline{y}).$ Hence by Proposition~\ref{Prop25} $x^* \in D^* F(\infty, \overline y)(y^*),$ which contradicts the choice of $\mu.$ Therefore, 
\begin{eqnarray*}
\frac{1}{\mathrm{reg}F (\infty, \bar{y})}  & \geqslant & \mathrm{rg}^+ F(\infty, \bar{y}).
\end{eqnarray*} 
The proof is complete.
\end{proof}

Another ingredient of our proof of Theorem~\ref{MainTheorem1} will be the following result; see also \cite{Gfrerer2023-2, Gfrerer2023-1}.

\begin{proposition} \label{MD34}
Let $X$ and $Y$ be finite-dimensional spaces, $F \colon X \rightrightarrows Y$ be a set-valued mapping with closed graph, and $\bar{y} \in J(F).$ If $\mathrm{rg}^{+} F(\infty, \bar{y}) < \infty,$ then there exists a single-valued  mapping $f \colon X \to Y$ of rank one Lipschitz such that
\begin{eqnarray} \label{PT7}
\lim_{\|x\| \to \infty} \|f(x)\| = 0, \quad \mathrm{lip} f(\infty) \leqslant \mathrm{rg}^{+} F(\infty, \bar{y}), \quad \textrm{and} \quad \mathrm{rg}^{+} (F + f)(\infty, \bar{y}) =  0.
\end{eqnarray}
\end{proposition}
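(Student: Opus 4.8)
The goal is to construct a rank-one Lipschitz perturbation $f$ that destroys metric regularity at infinity, i.e. pushes $\mathrm{rg}^+(F+f)(\infty,\bar y)$ down to $0$, while keeping $\mathrm{lip}\,f(\infty)$ as small as $\mathrm{rg}^+F(\infty,\bar y)$ and $f(x)\to 0$ at infinity. The natural approach mirrors the finite-dimensional radius-theorem construction (as in \cite{Dontchev2003, Ioffe2017}): start from a near-optimal coderivative element witnessing $\mathrm{rg}^+F(\infty,\bar y)$, realize it along a sequence via Proposition~\ref{Prop25}, and build $f$ as a single scalar-times-fixed-vector map that, near that sequence, shifts the graph of $F$ so that the shifted coderivative contains $0$ paired with a unit dual vector.

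First I would fix, for any $\mu>\mathrm{rg}^+F(\infty,\bar y)$, a pair $y^*\in Y^*$ with $\|y^*\|=1$ and $x^*\in D^*F(\infty,\bar y)(y^*)$ with $\|x^*\|<\mu$ (or handle $\mathrm{rg}^+F(\infty,\bar y)=0$ directly, where even $f\equiv 0$ may work, or use an arbitrarily small $\mu$). By Proposition~\ref{Prop25} there are $(x_k,y_k)\xrightarrow{\mathrm{gph}F}(\infty,\bar y)$, $z_k^*\to y^*$, and $x_k^*\in D^*F(x_k,y_k)(z_k^*)$ with $x_k^*\to x^*$. Using $\|y^*\|=1$, pick $v\in Y$ with $\|v\|=1$ and $\langle y^*,v\rangle=1$ (Hahn–Banach / finite-dimensionality). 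The perturbation will have the form $f(x)=\xi(x)\,v$ with $\xi\colon X\to\mathbb R$ Lipschitz at infinity, $\xi(x)\to 0$ as $\|x\|\to\infty$, and $\mathrm{lip}\,\xi(\infty)\le\|x^*\|<\mu$. Concretely I would take $\xi$ to be (a smoothed, decaying version of) the affine functional $x\mapsto -\langle x^*,x-x_k\rangle$ localized near $x_k$ for each $k$ — e.g. $\xi(x)=\max\{0,\,c_k-\langle x^*,x-x_k\rangle\}$ on a neighborhood of $x_k$ chosen so these neighborhoods are disjoint, the constants $c_k\to 0$, and $\xi\equiv 0$ away from all of them. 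This is rank-one Lipschitz by construction, has $\|f(x)\|\to 0$, and $\mathrm{lip}\,f(\infty)=\mathrm{lip}\,\xi(\infty)=\|x^*\|$; then a standard diagonal/limiting argument over $\mu\downarrow\mathrm{rg}^+F(\infty,\bar y)$ gives the desired bound on $\mathrm{lip}\,f(\infty)$.

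The core computation is showing $\mathrm{rg}^+(F+f)(\infty,\bar y)=0$. The key identity is the behavior of coderivatives under single-valued Lipschitz perturbation: at each finite point, $(x^*,-y^*)\in N_{\mathrm{gph}(F+f)}(x,(F+f)(x))$ corresponds via the shift $(x,y)\mapsto(x,y+f(x))$ to $(x^*+\text{(something from }Df),-y^*)\in N_{\mathrm{gph}F}$; for $f(x)=\xi(x)v$ with $\xi$ affine (gradient $-x^*$) near $x_k$, the contribution is exactly designed so that the coderivative element $x_k^*$ of $F$ at $(x_k,y_k)$ in direction $z_k^*$ corresponds to a coderivative element of $F+f$ at the shifted point that is $x_k^*-\langle z_k^*,v\rangle x^*$, paired with $z_k^*$. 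Since $z_k^*\to y^*$ and $\langle y^*,v\rangle=1$, this tends to $x^*-x^*=0$ while the dual component has norm $\to\|y^*\|=1$; passing to the limit along $(\infty,\bar y)$ via Proposition~\ref{Prop25} yields $0\in D^*(F+f)(\infty,\bar y)(y^*)$ with $\|y^*\|=1$, hence $\mathrm{rg}^+(F+f)(\infty,\bar y)=0$. One must also check that $\bar y\in J(F+f)$ — immediate since $f(x_k)\to 0$ so $(x_k,y_k+f(x_k))\xrightarrow{\mathrm{gph}(F+f)}(\infty,\bar y)$.

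The main obstacle I anticipate is the bookkeeping needed to make $f$ genuinely globally well-defined, rank-one Lipschitz \emph{everywhere} (not just near the $x_k$), with the neighborhoods of the $x_k$ disjoint and $\xi$ decaying — this requires passing to a subsequence of $\{x_k\}$ that escapes to infinity fast enough and choosing radii $\rho_k\to 0$ so the balls $\mathbb B_{\rho_k}(x_k)$ are pairwise disjoint, while still keeping $\mathrm{lip}\,\xi(\infty)\le\|x^*\|$ (the truncation $\max\{0,\cdot\}$ does not increase the Lipschitz constant, which is the point) and $\sup_{\mathbb B_{\rho_k}(x_k)}|\xi|\to 0$. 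The second delicate point is the exact coderivative-shift formula for a nonsmooth (piecewise-affine) $\xi$: one should either arrange that near each $x_k$ the relevant graph points of $F$ land where $\xi$ is smooth (affine), or invoke the sum rule for coderivatives of $F+f$ with $f$ strictly differentiable on the relevant set (Mordukhovich's sum rule, valid in finite dimensions), which is why choosing $\rho_k$ small enough that $\xi$ is affine on $\mathbb B_{\rho_k}(x_k)$ is convenient. Everything else is routine once these localization choices are pinned down.
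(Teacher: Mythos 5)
Your proposal is correct and follows essentially the same route as the paper's proof: realize a minimizing coderivative sequence $(x_k,y_k,x_k^*,y_k^*)$ via Proposition~\ref{Prop25}, attach to each $x_k$ a rank-one affine bump supported on pairwise disjoint shrinking balls so that its derivative at $x_k$ cancels $x_k^*$ under the coderivative sum rule, and pass to the limit to get $0\in D^*(F+f)(\infty,\bar y)(y^*)$ for some unit $y^*$. The two obstacles you flag are precisely what the paper's proof resolves explicitly: it secures $\mathrm{lip}\,f(\infty)\leqslant \mathrm{rg}^{+}F(\infty,\bar y)$ (rather than $<\mu$ for each $\mu$, which your ``diagonal/limiting argument'' leaves unelaborated) by rescaling the $k$-th bump with $t_k=\frac{k}{k+1}\,\mathrm{rg}^{+}F(\infty,\bar y)/\|x_k^*\|$ along the minimizing sequence, and it localizes the affine piece with the cutoff $s_k(x)=\max\{1-(\|x-x_k\|/\rho_k)^{1+1/k},0\}$, invoking \cite{Gfrerer2023-2} for the fact that this truncation does not inflate the Lipschitz modulus beyond $\mathrm{rg}^{+}F(\infty,\bar y)$ (your half-space truncation $\max\{0,c_k-\langle x^*,x-x_k\rangle\}$ preserves the constant but is not compactly supported, so it still needs such a radial cutoff).
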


\begin{proof}
Since the assertion of the lemma is trivially true with $f \equiv 0$ when $\mathrm{rg}^{+} F(\infty, \bar{y}) = 0,$ we can suppose that $\mathrm{rg}^{+} F(\infty, \bar{y}) > 0.$ By Proposition~\ref{Prop25}, there are sequences $(x_k, y_k) \in \mathrm{gph} F$ with $(x_k, y_k)  \to (\infty, \bar{y}),$  $y_k^* \in Y^*$ with $\|y_k^*\| = 1,$ and $x_k^* \in D^*F(x_k,y_k)(y_k^*)$ such that
\begin{eqnarray*}
\lim_{k \to \infty} \|x_k^*\| &=& \mathrm{rg}^{+} F(\infty, \bar{y}) . 
\end{eqnarray*}
Without loss of generality, we may assume that 
\begin{eqnarray*}
0 \ < \ \inf_{k \in \mathbb{N}} \|x_k^*\|  &\leqslant& \sup_{k \in \mathbb{N}} \|x_k^*\| \ < \ \infty 
\end{eqnarray*}
Let $t_k := \frac{k}{ k + 1}\frac{\mathrm{rg}^{+} F(\infty, \bar{y}) }{\|x_k^*\|}.$ Then $t_k > 0$ and $t_k \to 1$ as $k \to \infty.$

Passing to a subsequence if necessary, we can assume that $\|x_{k} \| + 1 < \|x_{k + 1}\|$ for all $k.$ Then certainly there exists a sequence $\{\rho_k\}$
of positive real numbers such that 
\begin{eqnarray*}
\lim_{k\to \infty} \rho_k = 0 \quad {\rm and} \quad \overline{\mathbb{B}}_{\rho_k}(x_k) \cap \overline{\mathbb{B}}_{\rho_l}(x_l) = \emptyset \quad {\rm for }\quad k\neq l. 
\end{eqnarray*}
For each $k \geqslant 1,$ choose an element $v_k \in Y$ with $\|v_k\| = 1$ satisfying
\begin{eqnarray}\label{eq vk}
\langle y_k^*, v_k \rangle &>& 1-\dfrac{1}{k},
\end{eqnarray}
and define the mapping $f_k \colon X \to Y, x \mapsto f_k(x),$ by
\begin{eqnarray*}
%s_k(x)& := & {\rm max} \left\{ 1 - \left(\dfrac{\|x-x_k\|}{\rho_k}\right)^{1+\frac{1}{k}},0 \right\}, \\ 
%g_k(x) & := & \langle x_k^*, x-x_k\rangle v_k,\\
f_k(x) & := & t_k s_k(x) \langle x_k^*, x-x_k\rangle v_k,
\end{eqnarray*}
where
\begin{eqnarray*}
s_k(x)& := & {\rm max} \left\{ 1 - \left(\dfrac{\|x-x_k\|}{\rho_k}\right)^{1+\frac{1}{k}},0 \right\}.
\end{eqnarray*}
Then the mapping $f_k$ is continuous differentiable around $x_k$ with
\begin{eqnarray*}
D^*f_k(x_k)(y_k^*) &=& - t_k \langle y_k^*,v_k \rangle x_k^*.
\end{eqnarray*}
Moreover, from the the proof of \cite[Lemma~3.1]{Gfrerer2023-2} we know that $f_k$ is Lipschitz continuous on $\overline{\mathbb{B}}_{\rho_k}(x_k)$ with modulus $\mathrm{rg}^{+} F(\infty, \bar{y}).$

Next we define the mapping $f \colon X \to Y, x \mapsto f(x),$ by
\begin{eqnarray*}
f(x) &:=& - \sum_{k = 1}^\infty f_k(x).
\end{eqnarray*}
By construction, the mapping $f$ is well-defined, $f(x) = f_k(x)$ for all $x \in \overline{\mathbb{B}}_{\rho_k}(x_k)$
and all $k \geqslant 1,$ and $f(x) = 0$ for all $x \notin \bigcup_{k = 1}^\infty \overline{\mathbb{B}}_{\rho_k}(x_k).$
We will show that the mapping $f$ has the desired properties.

We first show that $f$ is rank one Lipschitz. Indeed, let $x \in X.$ If $x \in \overline{\mathbb{B}}_{\rho_k}(x_k)$ for some $k \geqslant 1$ then there exists $\widetilde{\rho}_k > \rho_k$ such that $f(u) = - t_k s_k(u) \langle x_k^*, u - x_k\rangle v_k$ for all $u \in {\mathbb{B}}_{\widetilde{\rho}_k}(x_k).$ Note that ${\mathbb{B}}_{\tilde{\rho}_k}(x_k)$ is a neighborhood of $x.$ If $x \not \in \cup_{k = 1}^\infty \overline{\mathbb{B}}_{\rho_k}(x_k)$ then $f(u) = 0$ for all $u$ in a sufficiently small neighborhood of $x.$ Hence $f$ is 
rank one Lipschitz.

For the first equation in \eqref{PT7}, suppose $\|z_l\| \to \infty$ with $z_l \in X.$ Then, for each integer $l,$ either $z_l \notin \bigcup_{k \geqslant 1 }\overline{\mathbb{B}}_{\rho_k}(x_k)$ or $z_l \in \mathbb{B}_{\rho_k}(x_k)$ for some $k$ (depending on $l$). 
 In the first case one has $f(z_l) = 0,$ while in the second case the conclusion is that
\begin{eqnarray*}
\|f(z_l)\| &=& \|f_k(z_l)\| \leqslant \ t_k \|x_k^*\| \|z_l - x_k\| \ \leqslant \ t_k \rho_k \|x_k^*\|.
\end{eqnarray*}
It follows that $\|f(z_l)\| \to 0$ as $l \to \infty.$ The arbitrary choice of the sequence $z_l$ ensures that 
\begin{eqnarray*}
\lim_{\|x\| \to \infty} \|f(x)\| &=& 0,
\end{eqnarray*}
as required.

We next prove that the mapping $f$ is Lipschitz continuous on $X$ with modulus $\gamma :=  \mathrm{rg}^{+} F(\infty, \bar{y}).$ To this end, take any $x, x' \in X$. The four cases should be considered. 

\subsubsection*{Case 1: $x, x' \notin \bigcup_{k = 1}^\infty \overline{\mathbb{B}}_{\rho_k}(x_k)$}
We have $f(x) = f(x') = 0,$ and so $\|f(x) - f(x')\| \leqslant \gamma \|x-x'\|.$
 
\subsubsection*{Case 2: $x \in \mathbb{B}_{\rho_k}(x_k)$ for some $k$ and $x' \notin  \bigcup_{k = 1}^\infty  \overline{\mathbb{B}}_{\rho_k}(x_k)$}

The segment $[x, x'] := \{ tx  + (1 - tx') \mid t \in [0,1]\}$ intersects the boundary of $\mathbb{B}_{\rho_k}(x_k)$ at least one point $z.$
Note that $f(z) = f_k(z) = 0$ and $f(x') = 0.$ Hence
\begin{eqnarray*}
\|f(x) - f(x')\| 
& \leqslant & \|f(x) - f(z)\|+ \|f(z) - f(x')\|  \\
& = & \|f_k(x) - f_k(z)\| \ \leqslant \ \gamma \|x-z\| \ \leqslant \ \gamma \|x-x'\|. 
\end{eqnarray*}

\subsubsection*{Case 3: $x \in \mathbb{B}_{\rho_k}(x_k)$ and $x' \in \mathbb{B}_{\rho_l}(x_l)$  for some $k, l$ with $k \ne l$}
The segment $[x, x']$ intersects the boundary of the ball $\mathbb{B}_{\rho_k}(x_k)$ (resp., 
$\mathbb{B}_{\rho_l}(x_l)$) at least one point $z$ (resp., $z'$). We have $f(z) = f_k(z) = 0$ and $f(z') = f_l(z') = 0.$ Thus,
\begin{eqnarray*}
\|f(x) - f(x')\| 
& \leqslant & \|f(x) - f(z)\|+ \|f(z) - f(z')\|+\|f(z') - f(x')\|  \\
& = & \|f(x) - f(z)\| + \|f(z') - f(x')\| \\
& = & \|f_k(x) - f_k(z)\| + \|f_l(z') - f_l(x')\| \\
& \leqslant & \gamma \|x-z\| + \gamma \|z' - x'\| \ \leqslant \ \gamma \|x - x'\|. 
\end{eqnarray*}

\subsubsection*{Case 4: $x, x' \in \mathbb{B}_{\rho_k}(x_k)$ for some $k$}
Then, $f(x) = f_k(x)$ and $f(x') = f_k(x').$ Consequently, 
\begin{eqnarray*}
\|f(x) - f(x')\|  & = & \|f_k(x) - f_k(x')\| \ \leqslant \ \gamma \|x - x'\|. 
\end{eqnarray*}

Therefore, in all cases, the mapping $f$ is Lipschitz continuous on $X$ with modulus $\gamma.$ The inequality in \eqref{PT7} is proved.

Finally, we prove the last equation in \eqref{PT7}. To this end, take any $k \geqslant 1.$ A direct calculation shows that
the mapping $f$ is continuous differentiable around $x_k$ with
\begin{eqnarray*}
D^*f(x_k)(y_k^*) &=& - t_k \langle y_k^*,v_k \rangle x_k^*.
\end{eqnarray*}
This, together with the sum rule in \cite[Theorem~3.9]{Mordukhovich2018}, implies that
\begin{eqnarray*}
\left(1 - t_k \langle y_k^*, v_k \rangle \right)x_k^* &\in & D^*F(x_k,y_k)(y_k^*) + D^*f(x_k)(y_k^*) \ \subset \ D^*(F + f)(x_k,y_k)(y_k^*).  
\end{eqnarray*}
Taking the limit as $k \to \infty,$ we obtain
\begin{eqnarray*}
\mathrm{rg}^{+} (F + f)(\infty, \bar{y}) & \leqslant & \lim_{k \to \infty} \left( 1- t_k \left(1 - \frac{1}{k}\right)\right)\|x_k^*\| \ = \ 0. 
\end{eqnarray*}
The proof is complete. 
\end{proof}

We are now in a position to prove the main result of this section.

\begin{proof}[Proof of Theorem~\ref{MainTheorem1}]
We first verify the inequality~\eqref{PT10}. By contradiction, suppose that there is a mapping $f \in \mathscr{F}$ such that $\mathrm{lip} f(\infty) < \big( \mathrm{reg} F(\infty, \bar{y}) \big)^{-1}$ and $F + f $ is not metrically regular at $(\infty, \bar{y}).$ Then we can find constants $\lambda > \mathrm{lip} f(\infty) $ and $\kappa > \mathrm{reg} F(\infty, \bar{y}) $ such that $\lambda < \kappa^{-1}.$ According to Proposition~\ref{MD32}, we obtain
\begin{eqnarray*}
\mathrm{reg} (F + f) (\infty, \bar{y}) &<& \frac{\kappa}{1 - \kappa \lambda},
\end{eqnarray*}
which yields $F + f$ is metrically regular at $(\infty, \bar{y}),$ a contradiction. Therefore, the inequality~\eqref{PT10} holds.

We now assume that $\dim X < \infty$ and $\dim Y < \infty.$ Since the second assertion of the theorem is trivially true when $\mathrm{reg}F(\infty, \bar{y})  = 0$ or $\mathrm{reg}F(\infty, \bar{y}) = \infty,$ we may suppose that $0 < \mathrm{reg} F(\infty, \bar{y}) < \infty.$ 
By Proposition~\ref{MD33}, we have
\begin{eqnarray} \label{PT9}
\mathrm{rg}^{+} F(\infty, \bar{y})  &=& \frac{1}{\mathrm{reg} F (\infty, \bar{y})} \ < \ \infty.
\end{eqnarray}

In view of Proposition~\ref{MD34}, there exists a mapping $f \in \mathscr{F}$ of rank one Lipschitz such that 
\begin{eqnarray*}
\mathrm{lip} f(\infty) \leqslant \mathrm{rg}^{+} F(\infty, \bar{y})  \quad \textrm{ and } \quad  \mathrm{rg}^{+} (F + f)(\infty, \bar{y}) = 0.
\end{eqnarray*}
This, together with Proposition~\ref{MD33} again, indicates the absence of metric regularity of $F + f$ at $(\infty, \bar{y}).$ Therefore,
\begin{eqnarray*}
\mathrm{rg}^{+} F(\infty, \bar{y}) 
&\ge& \mathrm{lip} f(\infty) \\
&\ge&  \inf_{} \{\mathrm{lip} g(\infty) \mid F + g \textrm{ is not metrically regular at $(\infty, \bar{y})$} \},
\end{eqnarray*}
where the infimum is taken over all mappings $g \in \mathscr{F}$ of rank one Lipschitz. It follows from~\eqref{PT9} that the inequality~\eqref{PT10} holds as an equation.
\end{proof}

\section{Strong metric regularity at infinity} \label{Section4}

In this section, we study the radius of strong metric regularity at infinity. We begin with the following.
\begin{definition}{\rm
Let $X$ and $Y$ be Banach spaces, $F \colon X \rightrightarrows Y$ be a set-valued mapping, and $\bar{y} \in J(F).$ 
The mapping $F$ is said to be {\em strongly metrically regular at $(\infty, \bar{y})$} if the metric regularity condition in Definition~\ref{DNSR} is satisfied by some $\kappa, \gamma, U$ and $V$ such that, in addition, the graphical localization  of $F^{-1}$ with respect to $U$ and $V$ is nowhere multivalued, i.e., when $y \in \mathbb{B}_r(\bar{y}),$ there is at most one solution $x \in U$ to the generalized equation $y \in F(x).$
}\end{definition}

The next result is a perturbation radius theorem for strong metric regularity at infinity to go along with the one for metric regularity at infinity.

\begin{theorem}[radius theorem for strong metric regularity at infinity] \label{MainTheorem2}
Let $X$ and $Y$ be Banach spaces, $F \colon X \rightrightarrows Y$ be a set-valued mapping with closed graph, and $\bar{y} \in J(F).$ If $F$ is strongly metrically regular at $(\infty, \bar{y})$ then 
\begin{eqnarray*}
\inf_{f \in \mathscr{F}} \{\mathrm{lip} f(\infty) \mid F + f \textrm{ is not strongly metrically regular at $(\infty, \bar{y})$} \} &\ge&
\frac{1}{\mathrm{reg} F (\infty, \bar{y})}.
\end{eqnarray*}
When $X$ and $Y$ are finite-dimensional, the inequality becomes an equation; moreover, the infimum is unchanged if restricted to mappings $f \in \mathscr{F}$ of rank one Lipschitz.
\end{theorem}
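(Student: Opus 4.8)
The plan is to follow the same three-step architecture as in the proof of Theorem~\ref{MainTheorem1}, adapting each ingredient to the strong case. First, for the inequality, I would establish a \emph{strong} version of the Lyusternik--Graves theorem at infinity: if $F$ is strongly metrically regular at $(\infty,\bar y)$ with $\mathrm{reg} F(\infty,\bar y)<\kappa<\infty$, and $f\colon X\to Y$ satisfies $\lim_{\|x\|\to\infty}\|f(x)\|=0$ and $\mathrm{lip} f(\infty)<\lambda<\kappa^{-1}$, then $F+f$ is strongly metrically regular at $(\infty,\bar y)$ (with modulus $<\kappa/(1-\kappa\lambda)$). Metric regularity of $F+f$ already follows from Proposition~\ref{MD32}, so the only new content is single-valuedness of the graphical localization of $(F+f)^{-1}$: if $x,x'\in U$ both solve $y\in(F+f)(x)$, then $y-f(x)\in F(x)$ and $y-f(x')\in F(x')$; using the metric-regularity estimate for $F$ together with the bound $\|f(x)-f(x')\|\le\lambda\|x-x'\|$ one gets $\|x-x'\|\le\kappa\lambda\|x-x'\|$, forcing $x=x'$ since $\kappa\lambda<1$. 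Feeding this into the contradiction argument exactly as in the proof of Theorem~\ref{MainTheorem1} yields the displayed inequality.

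For the finite-dimensional equality, the strategy is to reduce the strong case to the non-strong case already handled. The mapping $f$ produced by Proposition~\ref{MD34} satisfies $\mathrm{lip} f(\infty)\le\mathrm{rg}^+F(\infty,\bar y)=1/\mathrm{reg} F(\infty,\bar y)$ and $\mathrm{rg}^+(F+f)(\infty,\bar y)=0$; by Proposition~\ref{MD33} this means $F+f$ is \emph{not even metrically regular} at $(\infty,\bar y)$, hence \emph{a fortiori} not strongly metrically regular there. Therefore the infimum over rank-one-Lipschitz perturbations in $\mathscr{F}$ destroying strong metric regularity is $\le\mathrm{rg}^+F(\infty,\bar y)=1/\mathrm{reg} F(\infty,\bar y)$, and combined with the inequality from the first part (which applies verbatim to the strong case since, under the standing hypothesis, $F$ strongly metrically regular has the same modulus $\mathrm{reg} F(\infty,\bar y)$) we get equality, and the infimum is attained within the rank-one class.

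I expect the main obstacle to be the strong Lyusternik--Graves statement at infinity — specifically, verifying that the graphical localization of $(F+f)^{-1}$ remains single-valued on an appropriate neighborhood at infinity. One must be careful that the neighborhoods $U$ (of infinity) and $V$ (of $\bar y$) shrink consistently, that the argument points $z^k$ constructed in the iteration of Proposition~\ref{MD32} stay outside the relevant ball $\mathbb{B}_R$ so that (a2) applies, and that the uniqueness argument only invokes the metric-regularity inequality at points where $\mathrm{dist}(y-f(x),F(x))<\gamma$. A subtlety worth flagging is that ``strong metric regularity'' here is a property \emph{at infinity}, so single-valuedness is required only for $x$ in a neighborhood of infinity, not globally; this is exactly what the estimate $\|x-x'\|\le\kappa\lambda\|x-x'\|$ delivers once both $x,x'$ lie in $U$. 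Apart from this, every other step is a routine transcription of the finite-neighborhood arguments of \cite{Dontchev2003, Dontchev2004} into the at-infinity language already set up in Sections~\ref{Section2}~and~\ref{Section3}.
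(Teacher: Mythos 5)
Your proposal matches the paper's proof: the paper packages your ``strong Lyusternik--Graves theorem at infinity'' as Proposition~\ref{MD38} (whose uniqueness argument is exactly your contraction estimate $\|x^1-x^2\|\leqslant\kappa\lambda\|x^1-x^2\|$, run through the single-valued Lipschitz localization $\phi$ of $F^{-1}$ provided by Proposition~\ref{BD37}), and then deduces the theorem from Theorem~\ref{MainTheorem1} together with the observation that a perturbation destroying metric regularity a fortiori destroys strong metric regularity. The only point to make explicit is that the contraction step should use $x^i=\phi\big(y-f(x^i)\big)$ rather than the bare metric-regularity inequality, since $\mathrm{dist}\big(x,F^{-1}(y-f(x'))\big)\leqslant\kappa\|f(x)-f(x')\|$ only controls the distance to the nearest point of that fiber, which is identified with $x'$ precisely through the single-valuedness of the localization.
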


For proving the theorem, we will need two propositions. The first one reads as follows.

\begin{proposition} \label{BD37}
Let $X$ and $Y$ be Banach spaces, $F \colon X \rightrightarrows Y$ be a set-valued mapping with closed graph, and $\bar{y} \in J(F).$ 
The following properties are equivalent:
\begin{enumerate}[{\rm (i)}]
\item The mapping $F$ is strongly metrically regular at $(\infty, \bar{y}).$

\item There exist neighborhoods $U$ of the infinity in $X$ and $V$ of $\bar{y}$ in $Y$ such that the mapping $y \mapsto F^{-1}(y) \cap U$ is single-valued on $V$ and moreover Lipschitz continuous.
\end{enumerate}
\end{proposition}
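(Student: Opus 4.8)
The plan is to follow the standard characterisation of strong metric regularity from the local theory (the counterpart of this statement appears in \cite{Dontchev2009, Ioffe2017}), the only new ingredient being that membership $\bar{y}\in J(F)$ is what makes the localisation of $F^{-1}$ well defined near $(\infty,\bar{y})$.

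For the implication (i)$\Rightarrow$(ii) I would start from data $\kappa,\gamma>0$, a neighbourhood of infinity $U=X\setminus\mathbb{B}_{R_0}$ and $V=\mathbb{B}_{r}(\bar{y})$ witnessing strong metric regularity, so that $F^{-1}(y)\cap U$ has at most one element for $y\in V$. Using $\bar{y}\in J(F)$, I fix one pair $(x_k,y_k)\in\mathrm{gph}F$ with $\|x_k\|$ large and $y_k$ close to $\bar{y}$. For $y$ in a sufficiently small ball $V^*\subset V$ around $\bar{y}$ one has $\mathrm{dist}(y,F(x_k))\le\|y-y_k\|<\gamma$, so the metric regularity inequality produces $x\in F^{-1}(y)$ with $\|x-x_k\|$ of order $\kappa\gamma$, hence with $\|x\|>R_0$ because $\|x_k\|$ was chosen large; thus $x\in F^{-1}(y)\cap U$, and combined with the at-most-one property this yields a single-valued map $s\colon V^*\to X$ with $\{s(y)\}=F^{-1}(y)\cap U$, whose range stays in a fixed neighbourhood of infinity. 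Lipschitz continuity of $s$ then follows from a second application of metric regularity: for $y,y'\in V^*$ and $x:=s(y)$ one has $y\in F(x)$, hence $\mathrm{dist}(y',F(x))\le\|y'-y\|<\gamma$, so metric regularity gives $x''\in F^{-1}(y')$ with $\|x-x''\|\le\kappa\|y'-y\|+\epsilon$; since $\|x\|$ is large, $x''\in U$, whence $x''=s(y')$ by uniqueness, and $\epsilon\downarrow 0$ gives $\|s(y)-s(y')\|\le\kappa\|y-y'\|$. Throughout one works with $\epsilon$-approximate solutions, since distances to $F(x)$ or $F^{-1}(y)$ need not be attained in a Banach space.

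For (ii)$\Rightarrow$(i) I would take $U$, $V$ and the Lipschitz localisation $s$ (with modulus $\ell$ and $\{s(y)\}=F^{-1}(y)\cap U$ on $V$), shrink to a smaller neighbourhood of infinity $U'\subset U$ and a ball $V'=\mathbb{B}_{r'}(\bar{y})\subset V$, and take $\gamma>0$ small enough that the balls used below stay inside $V$. Given $x\in U'$ and $y\in V'$ with $\mathrm{dist}(y,F(x))<\gamma$, the distance conventions force $F(x)\ne\emptyset$; for small $\epsilon>0$ pick $y'\in F(x)$ with $\|y-y'\|<\mathrm{dist}(y,F(x))+\epsilon$, so $y'\in V$ and $x\in F^{-1}(y')\cap U=\{s(y')\}$. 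Hence
\[
\mathrm{dist}\big(x,F^{-1}(y)\big)\ \le\ \|x-s(y)\|\ =\ \|s(y')-s(y)\|\ \le\ \ell\,\|y'-y\|\ <\ \ell\big(\mathrm{dist}(y,F(x))+\epsilon\big),
\]
and $\epsilon\downarrow 0$ yields the metric regularity inequality at infinity with constant $\ell$ on $U'\times V'$; the at-most-one property on $U'$ is inherited from that on $U$, so $F$ is strongly metrically regular at $(\infty,\bar{y})$.

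I expect the main obstacle to be the ``no escape'' part of (i)$\Rightarrow$(ii): one must guarantee not merely that $F^{-1}(y)\ne\emptyset$ for $y$ near $\bar{y}$, but that these solutions — and all the points used in the Lipschitz estimate — remain in a neighbourhood of infinity where the single-valuedness hypothesis is available. This is exactly where the anchoring pair furnished by $\bar{y}\in J(F)$ is indispensable; the rest is routine Banach-space bookkeeping with $\epsilon$'s.
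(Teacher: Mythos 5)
Your argument is correct and follows essentially the same route as the paper: in (i)$\Rightarrow$(ii) you use the anchoring sequence supplied by $\bar{y}\in J(F)$ to get nonemptiness of $F^{-1}(y)\cap U$, combine it with the at-most-one property, and extract Lipschitz continuity from the metric regularity estimate; in (ii)$\Rightarrow$(i) you pull back through the Lipschitz localization. Your systematic use of $\epsilon$-approximate nearest points is in fact slightly more careful than the paper's proof of (ii)$\Rightarrow$(i), which invokes attainment of $\mathrm{dist}(y,F(x))$ from closedness of the graph — a step that is only automatic in finite dimensions.
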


\begin{proof}
(i) $\Rightarrow$ (ii). By assumptions, there are constants $\kappa > 0, r > 0, R > 0$ and $\gamma > 0$ satisfying the following conditions:
\begin{enumerate}[{\rm (b1)}]
\item For $x \in X \setminus \mathbb{B}_R$ and $y \in \mathbb{B}_r(\bar{y})$ with $\mathrm{dist}\big(y, F(x)\big) < \gamma,$ we have
\begin{eqnarray*}
\mathrm{dist}\big(x, F^{-1}(y)\big) &\leqslant& \kappa \mathrm{dist}\big(y, F(x)\big).
\end{eqnarray*}

\item When $y \in \mathbb{B}_r(\bar{y}),$ there is at most one solution $x \in X \setminus \mathbb{B}_R$ to the generalized equation $y \in F(x).$
\end{enumerate}

Fix $r_1$ such that 
\begin{eqnarray*}
0 &<& r_1 \ < \ \frac{1}{2} \min \{r, \gamma\}.
\end{eqnarray*}
We first show that
\begin{eqnarray} \label{LIP}
F^{-1}(y') \cap \big(X \setminus \mathbb{B}_{R}\big) & \subset & F^{-1}(y) + \kappa \|y' - y\|\mathbb{B} \quad \textrm{for all} \quad y, y' \in \mathbb{B}_{r_1}(\bar{y}).
\end{eqnarray} 
To this end, let $y, y' \in \mathbb{B}_{r_1}(\bar{y}).$ We have for all $x \in F^{-1}(y') \cap \big(X \setminus \mathbb{B}_{R}\big),$
\begin{eqnarray*}
\mathrm{dist}\big(y, F(x)\big) &\leqslant& \|y - y'\| \ \leqslant \ \|y - \bar{y}\| + \|y' - \bar{y}\| \ < \ 2r_1 \ < \ \gamma,
\end{eqnarray*} 
which, together with the condition~(b1), implies that
\begin{eqnarray*}
\mathrm{dist}\big(x, F^{-1}(y)\big) &\leqslant& \kappa \mathrm{dist}\big(y, F(x)\big) \ \leqslant \ \kappa \|y - y'\|.
\end{eqnarray*} 
Therefore, \eqref{LIP} holds.

We next show that the mapping $y \mapsto F^{-1}(y) \cap \big(X \setminus \mathbb{B}_{R}\big)$ is single-valued on $\mathbb{B}_{r_1}(\bar{y}).$ Indeed, take any $y \in \mathbb{B}_{r_1}(\bar{y}).$ Since $\bar{y} \in J(F),$ there exists a sequence $(x^k, y^k) \in \mathrm{gph}F$ such that $\|x^k\| \to \infty$ and $\|y^k - \bar{y}\| \to 0$  as $k \to \infty.$ Then for all $k$ sufficiently large, $x^k \in F^{-1}(y^k) \cap \big(X \setminus \mathbb{B}_{R}\big)$
and $y^k \in \mathbb{B}_{r_1}(\bar{y}),$ which, by \eqref{LIP}, yield $x^k \in F^{-1}(y) + \kappa \|y^k - y\|.$ Thus, $F^{-1}(y)  \cap \big(X \setminus \mathbb{B}_{R}\big) \ne \emptyset.$ This, together with the condition~(b2), implies that the mapping $y \mapsto F^{-1}(y) \cap \big(X \setminus \mathbb{B}_{R}\big)$ is single-valued on $\mathbb{B}_{r_1}(\bar{y})$ and moreover Lipschitz continuous with constant $\kappa$ in view of  \eqref{LIP} again.

(ii) $\Rightarrow$ (i). Let $\kappa > 0, R > 0$ and $r > 0$ be such that the mapping $y \mapsto F^{-1}(y) \cap (X \setminus \mathbb{B}_R)$ is single-valued on $\mathbb{B}_r(\bar{y})$ and Lipschitz continuous with constant $\kappa.$ Denote by $\phi(y)$ the unique element of $F^{-1}(y) \cap (X \setminus \mathbb{B}_R),$ when $y \in \mathbb{B}_r(\bar{y}).$ We have
\begin{eqnarray*}
\| \phi(y') - \phi(y)\| &\leqslant&  \kappa \|y' - y\| \quad \textrm{ for all } \quad y, y' \in \mathbb{B}_r(\bar{y}).
\end{eqnarray*}

Fix $\gamma$ such that
\begin{eqnarray*}
0 \ < \ \gamma \ < \frac{r}{2}. 
\end{eqnarray*}
Let $x \in X \setminus \mathbb{B}_{R}$ and $y \in \mathbb{B}_{\frac{r}{2}}(\bar{y})$ with $\mathrm{dist}\big(y, F(x)\big) < \gamma.$ We will show that 
\begin{eqnarray} \label{PT8}
\mathrm{dist}\big(x, F^{-1}(y)\big) &\leqslant& {\kappa} \mathrm{dist}\big(y, F(x) \big).
\end{eqnarray}
Indeed, since the graph of $F$ is a closed set in $X \times Y,$ there is $y' \in F(x)$ such that 
\begin{eqnarray*}
\|y - y'\|  &=& \mathrm{dist} \big(y, F(x) \big)  \ \leqslant \ \gamma.
\end{eqnarray*}
Then
\begin{eqnarray*}
\|y' - \bar{y}\| &\leqslant& \|y' - y\| + \|y - \bar{y}\| < \gamma + \frac{r}{2} \ < \ r.
\end{eqnarray*}
Since $x \in F^{-1}(y') \cap (X \setminus \mathbb{B}_R),$ we get $x = \phi(y').$ Note that $\phi(y)$ is the unique element of $F^{-1}(y) \cap (X \setminus \mathbb{B}_R).$ Hence
\begin{eqnarray*}
\mathrm{dist} \big(x, F^{-1}(y) \big)  &\leqslant& \|x - \phi(y)\| \ = \ \| \phi(y') - \phi(y)\| \\ 
& \leqslant &  \kappa \|y' - y\| \ = \ \kappa \mathrm{dist} \big(y, F(x) \big),
\end{eqnarray*}
as required.
\end{proof}

We have already seen in Proposition~\ref{MD32} that metric regularity at infinity is preserved under small Lipschitz perturbations.
The following result complements this property for the case of strong metric regularity at infinity.

\begin{proposition}\label{MD38}
Let $X$ and $Y$ be Banach spaces, $F \colon X \rightrightarrows Y$ be a set-valued mapping with closed graph, and $\bar{y} \in J(F).$ If 
$F$ is strongly metrically regular at $(\infty, \bar{y}),$ then for any single-valued mapping $f \colon X \to Y$ such that 
\begin{eqnarray*}
\lim_{\|x\|  \to \infty} \|f(x)\| &=& 0 \textrm{ and } \quad \mathrm{lip}f(\infty) \ < \ \frac{1}{\mathrm{reg} F(\infty, \bar{y})},
\end{eqnarray*}
we have $F + f$ is strongly metrically regular at $(\infty, \bar{y}).$
\end{proposition}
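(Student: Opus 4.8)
The plan is to reduce the statement to three ingredients already available: Proposition~\ref{MD32} (metric regularity at infinity is preserved), the localization characterization in Proposition~\ref{BD37}, and a short contraction estimate. Since $\mathrm{lip}f(\infty) < 1/\mathrm{reg}F(\infty,\bar y)$, I would first fix constants $\kappa,\lambda$ with
\begin{eqnarray*}
\mathrm{reg}F(\infty,\bar y) \ < \ \kappa, \quad \mathrm{lip}f(\infty) \ < \ \lambda, \quad \textrm{and} \quad \kappa\lambda \ < \ 1.
\end{eqnarray*}
By Proposition~\ref{MD32} the mapping $F+f$ is metrically regular at $(\infty,\bar y)$, so there are $\kappa'>0$, $\gamma'>0$, a neighborhood $U'$ of the infinity and a neighborhood $V'$ of $\bar y$ witnessing this; moreover $\bar y\in J(F+f)$, as already noted in the proof of Proposition~\ref{MD32}, so the conclusion is meaningful. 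It therefore suffices to produce neighborhoods $U\subset U'$ of the infinity and $V\subset V'$ of $\bar y$ on which the localization of $(F+f)^{-1}$ is nowhere multivalued: strong metric regularity of $F+f$ at $(\infty,\bar y)$ then holds with $\kappa',\gamma',U,V$, since metric regularity at infinity is inherited by smaller neighborhoods.

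To build these neighborhoods I would first observe that, because $F$ is strongly metrically regular at $(\infty,\bar y)$, the metric regularity inequality holds with the chosen modulus $\kappa$ on some neighborhoods, while the uniqueness clause passes to any smaller neighborhoods; hence conditions (b1) and (b2) in the proof of Proposition~\ref{BD37} are available with this particular $\kappa$. Running the argument of Proposition~\ref{BD37}\,(i)$\Rightarrow$(ii) then yields $r>0$ and $R>0$ such that the map $y\mapsto\phi(y)$, the unique point of $F^{-1}(y)\cap(X\setminus\mathbb{B}_R)$, is well defined on $\mathbb{B}_r(\bar y)$ and $\kappa$-Lipschitz there. Next, using $\mathrm{lip}f(\infty)<\lambda$ together with $\lim_{\|x\|\to\infty}\|f(x)\|=0$, I would choose $R_1\ge R$ large enough that $f$ is $\lambda$-Lipschitz on $X\setminus\mathbb{B}_{R_1}$ and $\|f(x)\|<r/2$ whenever $\|x\|>R_1$, and then set $U:=U'\cap(X\setminus\mathbb{B}_{R_1})$ and $V:=V'\cap\mathbb{B}_{r/2}(\bar y)$.

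Finally I would check single-valuedness of the localization directly. Let $y\in V$ and let $x_1,x_2\in U$ satisfy $y\in(F+f)(x_i)$, i.e. $x_i\in F^{-1}(y-f(x_i))$ for $i=1,2$. Since $\|(y-f(x_i))-\bar y\|\leqslant\|y-\bar y\|+\|f(x_i)\|<r/2+r/2=r$ and $x_i\in X\setminus\mathbb{B}_R$, we get $x_i=\phi(y-f(x_i))$, whence
\begin{eqnarray*}
\|x_1-x_2\| \ = \ \|\phi(y-f(x_1))-\phi(y-f(x_2))\| \ \leqslant \ \kappa\|f(x_1)-f(x_2)\| \ \leqslant \ \kappa\lambda\|x_1-x_2\|.
\end{eqnarray*}
As $\kappa\lambda<1$ this forces $x_1=x_2$, so the localization of $(F+f)^{-1}$ with respect to $U$ and $V$ is nowhere multivalued, completing the proof.

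The step I expect to be the only genuine obstacle is the neighborhood bookkeeping that keeps $y-f(x_i)$ inside the domain $\mathbb{B}_r(\bar y)$ of $\phi$ whenever $x_i$ lies in the chosen neighborhood of the infinity; this is exactly where the hypothesis $\|f(x)\|\to 0$ is indispensable, a mere bound on $\mathrm{lip}f(\infty)$ being insufficient. Apart from that, the argument is a standard Lyusternik--Graves-type contraction, the one point requiring care being that one must work with a modulus $\kappa$ strictly between $\mathrm{reg}F(\infty,\bar y)$ and $1/\mathrm{lip}f(\infty)$ rather than with the exact regularity modulus.
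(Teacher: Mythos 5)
Your proposal is correct and follows essentially the same route as the paper's proof: obtain metric regularity of $F+f$ from Proposition~\ref{MD32}, extract the single-valued $\kappa$-Lipschitz localization $\phi$ of $F^{-1}$ via Proposition~\ref{BD37}, and rule out two distinct solutions by the contraction estimate $\|x^1-x^2\|\leqslant\kappa\lambda\|x^1-x^2\|$ with $\kappa\lambda<1$. The one place where you are slightly more explicit than the paper --- justifying that the metric regularity inequality with a modulus $\kappa$ arbitrarily close to $\mathrm{reg}F(\infty,\bar y)$ can be combined with the uniqueness clause on intersected neighborhoods --- is a welcome clarification rather than a divergence.
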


\begin{proof}
Choose $\kappa > 0$ and $\lambda > 0$ such that 
\begin{eqnarray*}
\mathrm{lip} f(\infty)  &<& \lambda \ < \ \kappa^{-1} \ <  \ \frac{1}{\mathrm{reg} F(\infty, \bar{y})}.
\end{eqnarray*}
By Proposition~\ref{BD37}, there exist constants $R > 0$ and $r > 0$ such that the mapping $y \mapsto F^{-1}(y) \cap (X \setminus \mathbb{B}_R)$ is single-valued on $\mathbb{B}_r(\bar{y})$ and Lipschitz continuous with constant $\kappa.$ Denote by $\phi(y)$ the unique element of $F^{-1}(y) \cap (X \setminus \mathbb{B}_R),$ when $y \in \mathbb{B}_r(\bar{y}).$ We have
\begin{eqnarray*}
\| \phi(y') - \phi(y)\| &\leqslant&  \kappa \|y' - y\| \quad \textrm{ for all } \quad y, y' \in \mathbb{B}_r(\bar{y}).
\end{eqnarray*}
Increasing $R$ if necessary, we may assume that 
\begin{eqnarray*}
\|f(x)\| &<& \frac{r}{2} \quad \textrm{ and } \quad \|f(x) - f(x')\| \ \leqslant \ \lambda \|x - x'\|
\end{eqnarray*} 
for all $x, x' \in X \setminus \mathbb{B}_{R}.$

By Proposition~\ref{MD32}, it suffices to show that for each $y \in \mathbb{B}_{\frac{r}{2}}(\bar{y}),$ the set $(F + f)^{-1}(y) \cap \big(X \setminus \mathbb{B}_{R}\big) $ can have at most one element.

On the contrary, assume that $y \in \mathbb{B}_{\frac{r}{2}}(\bar{y})$ and $x^1, x^2 \in X ,$ with $x^1 \ne x^2,$ are such that both $x^1$ and $x^2$ belong to $(F + f)^{-1}(y) \cap \big(X \setminus \mathbb{B}_{R}\big).$ Then, for $i = 1, 2$ we have
$x^i \in F^{-1}(y - f(x^i)) \cap \big(X \setminus \mathbb{B}_{R}\big)$ and 
\begin{eqnarray*}
\|y - f(x^i) - \bar{y}\|  &\leqslant& \|y - \bar{y}\| + \|f(x^i)\| \ < \ \frac{r}{2} + \frac{r}{2} \ = \ r.
\end{eqnarray*}
Therefore, $x^i = \phi(y - f(x^i)).$ Then
\begin{eqnarray*}
0 &<& \|x^1 - x^2\| \ = \ \|\phi(y - f(x^1)) - \phi(y - f(x^2)) \| \\
&\leqslant& \kappa \|f(x^1) - f(x^2)\| \ \leqslant \ \kappa \lambda \| x^1 - x^2\| \ < \ \| x^1 - x^2\|,
\end{eqnarray*}
which is absurd, and we are done.
\end{proof}

Based on Theorem~\ref{MainTheorem1}, it is now easy to obtain a parallel radius result for strong metric regularity.

\begin{proof}[Proof of Theorem~\ref{MainTheorem2}]
This follows directly from Theorem~\ref{MainTheorem1} and Proposition~\ref{MD38} with the observation that strong metric regularity at infinity is a stronger property than metric regularity at infinity. 
\end{proof}

\section{Conclusions} \label{Section5}

Metric regularity and strong metric regularity at infinity of set-valued mappings between Banach spaces are introduced in this paper.
We have shown that these properties are stable under small Lipschitz perturbations.
%and that perturbation radius theorems closely parallel to those developed for ordinary regularity in \cite{Dontchev2003} and \cite{Dontchev2004} are obtainable for metric regularity and strong metric regularity at infinity.
We have established some relationships between the modulus of metric regularity at infinity and the radius of (strong) metric regularity infinity.
%We did not discuss here how to efficiently compute these quantities and we also did not present applications of the results given here in variational analysis and optimization. These remain open tasks for further research. 

It would be nice to have applications of the results given here in variational analysis and optimization. This will be studied in the future research.
% We do not discuss here how to efficiently compute these quantities; this remains an open task for further research. On a broader level, one may ask what
% would be the aim for having these quantities computed.

\subsection*{Acknowledgments}
The authors would like to thank Professor Alexander Y. Kruger for the useful discussions.
A part of this work was done while the second author was visiting Academy for Advanced Interdisciplinary Studies, Northeast Normal University, Changchun, China; he is grateful to the Academy for its hospitality and support.

%\bibliographystyle{abbrv}
%\bibliography{D:/Submission/BibPureMath1,D:/Submission/BibAppMath1}

\end{document}